\title{A new proof of the Bondal-Orlov reconstruction \\ using Matsui spectra}
\author{Daigo Ito and Hiroki Matsui \thanks{2020 {\em Mathematics Subject Classification.} 14A10, 14F08, 14K05,18G80}\thanks{{\em Key words and phrases.} abelian variety, Balmer spectrum, quasi-projective variety, perfect derived category, tensor triangulated category, triangular spectrum, triangulated category}\thanks{The second author was partly supported by JSPS Grant-in-Aid for Early-Career Scientists 22K13894.}}
\date{}
\newcommand{\address}[1]{\gdef\@address{#1}}
\newcommand{\email}[1]{\gdef\@email{\url{#1}}}
\newcommand{\website}[1]{\gdef\@website{\url{#1}}}
\newcommand{\@endstuff}{\par\vspace{\baselineskip}\noindent\small
\begin{tabular}{@{}l}\scshape{Daigo Ito} \\ \scshape\@address\\\textrm{E-mail address:} \@email \\\textrm{Website:} \@website 
\\
\\
\scshape{Hiroki Matsui} \\ \scshape{Department of Mathematical Sciences, Tokushima University, Tokushima 770-8506, Japan}\\\textrm{E-mail address:} \url{hmatsui@tokushima-u.ac.jp} \\\textrm{Website:} \url{https://mthiroki.github.io/}\end{tabular}}
\address{Department of Mathematics, University of California, Berkeley, Evans Hall, CA 94720-3840, USA}
\email{daigoi@berkeley.edu}
\website{https://daigoi.github.io/}
\DeclareMathOperator{\aut}{Aut}
\DeclareMathOperator{\auteq}{Auteq}
\DeclareMathOperator{\End}{End}
\DeclareMathOperator{\FM}{FM}
\let \ker \relax
\DeclareMathOperator{\ker}{Ker}
\DeclareMathOperator{\perf}{\mathsf{Perf}}
\DeclareMathOperator{\pic}{Pic}
\DeclareMathOperator{\spc}{Spc}
\DeclareMathOperator{\spec}{Spec}
\let \sp \relax
\DeclareMathOperator{\sp}{Sp}
\DeclareMathOperator{\Th}{Th}
\newcommand {\bb}{\mathbb}
\renewcommand {\cal}{\mathcal}
\newcommand{\ecal}{\mathscr}
\renewcommand {\epsilon}{\varepsilon}
\renewcommand {\l}{\left}
\renewcommand {\r}{\right}
\newcommand {\emp}{\emptyset}
\newcommand {\inj}{\hookrightarrow}
\newcommand {\inv}{^{-1}}
\newcommand {\iso}{\cong}
\newcommand {\tens}{\otimes}
\newsavebox{\pullbacks}
\sbox\pullbacks{%
\begin{tikzpicture}%
\draw (0,0) -- (1ex,0ex);%
\draw (1ex,0ex) -- (1ex,1ex);%
\end{tikzpicture}}
\newcommand{\fm}{\mathsf{FM}}
\newcommand {\id}{{\rm id}}
\newcommand{\ser}{\mathsf{Ser}}
\let \sf \relax 
\newcommand{\sf}{\mathsf}
\newcommand{\injar}{\ar@{^(->}} 
\newcommand{\prarrow}[2]{\ar@<0.5ex>[r]^-{#1} \ar@<-0.5ex>[r]_-{#2}}
\newcommand{\plarrow}[2]{\ar@<0.5ex>[l]^-{#1} \ar@<-0.5ex>[l]_-{#2}}
\newcommand{\pdarrow}[2]{\ar@<0.5ex>[d]^-{#1} \ar@<-0.5ex>[d]_-{#2}}
\newcommand{\puarrow}[2]{\ar@<0.5ex>[u]^-{#1} \ar@<-0.5ex>[u]_-{#2}}
\theoremstyle{plain}
\newtheorem{theorem}{Theorem}[section] % change the numbering 
\newtheorem{claim}{Claim}[theorem]
\newtheorem{conjecture}[theorem]{Conjecture}
\newtheorem{corollary}[theorem]{Corollary}
\newtheorem{lemma}[theorem]{Lemma} 
\newtheorem{prop}[theorem]{Proposition}
\newtheorem{penmdef}[claim]{Definition}
\theoremstyle{definition}
\newtheorem{construction}[theorem]{Construction}
\newtheorem{definition}[theorem]{Definition} 
\newtheorem{example}[theorem]{Example}     
\newtheorem{notation}[theorem]{Notation}
\newtheorem{penmlem}[claim]{Lemma}
\newtheorem{penmthm}[claim]{Theorem}
\newtheorem{penmcor}[claim]{Corollary}
\newtheorem{penmeg}[claim]{Example} 
\newtheorem{inclaims}[claim]{Claim}
\theoremstyle{remark}
\newtheorem{remark}[theorem]{Remark}
\newtheorem{penmrem}[claim]{Remark}
\newtheoremstyle{indented}
  {1pt}% space before
  {1pt}% space after
  {\addtolength{\@totalleftmargin}{1.5em}
   \addtolength{\linewidth}{-1.5em}
   \parshape 1 1.5em \linewidth}% body font
  {}% indent
  {\bfseries}% header font
  {.}% punctuation
  {.5em}% after theorem header
  {}% header specification (empty for default)
\theoremstyle{indented}
\renewcommand\footnotemark{} %あまりスマートではありませんが一応これを追加することでエラーは回避でき残りにも影響はありません。
\begin{document}
\maketitle

\begin{abstract}
In 2005, Balmer defined the ringed space $\operatorname{Spec}_\otimes \mathcal{T}$ for a given tensor triangulated category, while in 2023, the second author introduced the ringed space $\operatorname{Spec}_\vartriangle \mathcal{T}$ for a given triangulated category. In the algebro-geometric context, these spectra provided several reconstruction theorems using derived categories. In this paper, we prove that $\operatorname{Spec}_{\otimes_X^\mathbb{L}} \operatorname{Perf} X$ is an open ringed subspace of $\operatorname{Spec}_\vartriangle \operatorname{Perf} X$ for a quasi-projective variety $X$. As an application, we provide a new proof of the Bondal-Orlov and Ballard reconstruction theorems in terms of these spectra.

Recently, the first author introduced the Fourier-Mukai locus $\operatorname{Spec}^\mathsf{FM} \operatorname{Perf} X$ for a smooth projective variety $X$, which is constructed by gluing Fourier-Mukai partners of $X$ inside $\operatorname{Spec}_\vartriangle \operatorname{Perf} X$. As another application of our main theorem, we demonstrate that $\operatorname{Spec}^\mathsf{FM} \operatorname{Perf} X$ can be viewed as an open ringed subspace of $\operatorname{Spec}_\vartriangle \operatorname{Perf} X$. As a result, we show that all the Fourier-Mukai partners of an abelian variety $X$ can be reconstructed by topologically identifying the Fourier-Mukai locus within $\operatorname{Spec}_\vartriangle \operatorname{Perf} X$.
% arXivにアップロードする際のために標準的なコードに変更しました。
%In 2005, Balmer defined the ringed space $\spec_\tens \cal T$ for a given tensor triangulated category while in 2023, the second author defined the ringed space $\spec_\vartriangle \cal T$ for a given triangulated category. In the algebro-geometric context, these spectra provided several reconstruction theorems using derived categories. 
%In this paper, we prove that $\spec_{\tens_X^\bb L} \cal \perf X$ is an open ringed subspace of $\spec_\vartriangle \cal \perf X$ for a quasi-projective variety $X$. As an application, we give a new proof of the Bondal-Orlov and Ballard reconstruction theorem in terms of these spectra. 

%Recently, the first author introduced the Fourier-Mukai locus $\spec^\fm \perf X$ for a smooth projective variety $X$, which is constructed by gluing Fourier-Mukai partners of $X$ inside $\spec_\vartriangle \cal \perf X$. As another application of our main theorem, we prove that $\spec^\fm \perf X$ can be viewed as an open ringed subspace of $\spec_\vartriangle \cal \perf X$. As a result, we show that all the Fourier-Mukai partners of an abelian variety $X$ can be reconstructed by topologically identifying the Fourier-Mukai locus inside $\spec_\vartriangle \perf X$. 
\end{abstract}

\tableofcontents
\section{Introduction}
We provide a new proof to the following version of the reconstruction theorem of Bondal-Orlov (\cite{bondal_orlov_2001}) shown by Ballard (\cite{ballard2011derived}). 
\begin{theorem}\label{thm: bondal-orlov-ballard}
    Let $X$ be a Gorenstein projective variety over an algebraically closed field with (anti-)ample canonical bundle. Then, the following assertions hold:
    \begin{enumerate}
        \item The variety $X$ can be reconstructed solely from the triangulated category structure of the derived category $\perf X$ of perfect complexes on $X$.
        \item If there exists a Gorenstein projective variety $Y$ with $\perf X \simeq \perf Y$, then $X \iso Y$. 
    \end{enumerate} 
\end{theorem}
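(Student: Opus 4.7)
The plan is to reduce Theorem~\ref{thm: bondal-orlov-ballard} to an intrinsic identification of a canonical open ringed subspace inside the Matsui spectrum $\spec_\vartriangle \perf X$. First I would invoke Balmer's reconstruction theorem applied to the tensor triangulated category $(\perf X, \otimes_X^{\bb L})$, which identifies $\spec_{\otimes_X^{\bb L}} \perf X$ with $X$ as a ringed space. Combining this with the main theorem of this paper, namely that $\spec_{\otimes_X^{\bb L}} \perf X$ sits as an open ringed subspace of the purely triangulated invariant $\spec_\vartriangle \perf X$, realizes $X$ itself as a canonical open ringed subspace of $\spec_\vartriangle \perf X$. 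The reconstruction task is thereby reduced to singling out this open subspace using only the triangulated structure of $\perf X$.

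For the intrinsic identification I would exploit the Bondal-Orlov / Ballard characterization of point objects and line bundles. Since $X$ is Gorenstein projective, the Serre functor $S_X \simeq - \otimes \omega_X[\dim X]$ exists on $\perf X$ and is determined by the triangulated structure alone. Under the (anti-)ampleness hypothesis on $\omega_X$, Bondal-Orlov's classical argument singles out the shifted skyscraper sheaves $k(x)[i]$ as the objects $P$ satisfying $S_X(P) \iso P[\dim X]$, $\hom(P,P) \iso k$, and $\hom(P, P[j]) = 0$ for $j < 0$. The prime thick subcategories generated by these intrinsically characterized objects then pick out precisely the closed points of the open ringed subspace of $\spec_\vartriangle \perf X$ identified with $X$ in the previous step, and the intrinsic recognition of line bundles (as invertible objects pairing appropriately with point objects) yields the structure sheaf.

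Assertion (i) follows: the above procedure reconstructs $X$ from the triangulated category $\perf X$ alone, as an open ringed subspace of $\spec_\vartriangle \perf X$. Assertion (ii) follows because any triangulated equivalence $\perf X \simeq \perf Y$ induces an isomorphism of Matsui spectra compatible with Serre functors, and thus carries the intrinsically characterized open subspace corresponding to $X$ onto that corresponding to $Y$, yielding $X \iso Y$ as ringed spaces and hence as varieties. The main technical obstacle is to verify that the Bondal-Orlov data translate faithfully into the spectral and sheaf-theoretic language of $\spec_\vartriangle \perf X$, so that the intrinsic characterization of the open subspace holds as \emph{ringed spaces} and not merely as underlying topological spaces or sets of primes; this is exactly the point at which the refined ringed-space structure carried by $\spec_\vartriangle$, rather than just its underlying set of primes, does the essential work.
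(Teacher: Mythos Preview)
Your proposal recovers the classical Bondal--Orlov/Ballard argument rather than the paper's new one. The paper's point is precisely to \emph{avoid} the object-level characterization of skyscraper sheaves and line bundles. Instead of singling out point objects $P$ via the conditions $S_X(P)\cong P[\dim X]$, $\hom(P,P)\cong k$, etc., the paper identifies the Balmer locus inside $\spc_\vartriangle\perf X$ in one stroke as the \emph{Serre invariant locus}
\[
\spc^{\ser}\perf X \;=\; \{\cal P\in\spc_\vartriangle\perf X \mid \bb S(\cal P)=\cal P\}.
\]
The equality $\spc^{\ser}\perf X=\spc_{\otimes_X^{\bb L}}\perf X$ is a direct consequence of Lemma~\ref{lemma: hirano-ouchi}(ii): a prime thick subcategory is a $\otimes$-ideal iff it is stable under tensoring by an (anti-)ample line bundle, and here $\bb S=(-)\otimes\omega_X[\dim X]$. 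Combined with Theorem~\ref{theorem: Balmer is open in Matsui} this gives $X\cong(\spc^{\ser}\perf X,\ecal O_{\perf X,\vartriangle}|_{\spc^{\ser}\perf X})$ with no further work. Your route through point objects is not wrong, but it reimports exactly the delicate classification step that the spectral machinery is meant to replace, and your concern about whether ``the Bondal--Orlov data translate faithfully into the spectral language'' is a symptom of this detour.

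Your argument for (ii) also has a gap. You assert that the equivalence carries the intrinsically characterized open subspace for $X$ onto that for $Y$, but $Y$ is only assumed Gorenstein projective, \emph{not} with (anti-)ample canonical bundle, so neither the point-object characterization nor the equality $\spc^{\ser}\perf Y=\spc_{\otimes_Y^{\bb L}}\perf Y$ is available on the $Y$ side. The paper handles this asymmetrically: one always has the containment $\Phi^{-1}(\spc_{\otimes_Y^{\bb L}}\perf Y)\subset\spc^{\ser}\perf X$ (since $\bb S_Y$ is tensoring with a line bundle and a shift, hence preserves $\otimes_Y$-ideals), and Theorem~\ref{theorem: Balmer is open in Matsui} upgrades this to an open immersion $Y\hookrightarrow X$ of $k$-schemes. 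Because $Y$ is projective the immersion is also proper, hence closed, and irreducibility of $X$ forces it to be an isomorphism. This last properness/connectedness step is essential and is missing from your sketch.
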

The basic idea of the proof is that using the Serre functor, we can reconstruct the Balmer spectrum. To provide a more detailed sketch, we recall the following constructions. In the sequel, let us assume $X$ is a noetherian scheme unless otherwise specified. 
\begin{itemize}
    \item In \cite{Balmer_2002}, Balmer constructed a ringed space $$\spec_{\tens_X^\bb L} \perf X = (\spc_{\tens_X^\bb L} \perf X, \ecal O_{\perf X, \tens}),$$ called the \textbf{Balmer spectrum}, from the tensor triangulated category $(\perf X, \tens_X^\bb L)$, where we set $\tens_X^\bb L:= \tens_{\ecal O_X}^\bb L$. This construction provides the seminal reconstruction result:
    \[
    \spec_{\tens_X^\bb L} \perf X \iso X. 
    \]
    \item In \cites{Matsui_2021, matsui2023triangular}, one of the authors constructed a ringed space $$\spec_\vartriangle \perf X = (\spc_\vartriangle \perf X, \ecal O_{\perf X, \vartriangle}),$$ called the \textbf{Matsui spectrum}\footnote{In these papers, the author introduced the ringed space under the name `triangular spectrum', and it is called the Matsui spectrum in \cites{HO22,hirano2024FMlocusK3,ito2023gluing}.}, only using the triangulated category structure of $\perf X$. 
\end{itemize}
Note that for each spectrum, the underlying topological space consists of thick subcategories of $\perf X$ satisfying certain conditions. For comparisons of those two spectra, there are three key results from \cites{Matsui_2021, matsui2023triangular, HO22}, respectively:
\begin{itemize}
    \item The Balmer spectrum is a subspace of the Matsui spectrum, topologically:
    \[
    \spc_{\tens_X^\bb L} \perf X \subset \spc_\vartriangle \perf X;
    \]
    \item If $\spc_{\tens_X^\bb L} \perf X \subset \spc_\vartriangle \perf X$ is an open subspace, then there is an open immersion of ringed spaces
    \[
    (\spec_{\tens_X^\bb L} \perf X)_\sf{red} \inj \spec_\vartriangle \perf X;
    \]
    \item Suppose $X$ is a Gorenstein projective variety with (anti-)ample canonical bundle. Then, we have
    \[
    \spc_{\tens_X^\bb L} \perf X = \spc^\ser \perf X \subset \spc_\vartriangle \perf X
    \]
    where we let $\bb S$ denote the Serre functor of $\perf X$ and define the \textbf{Serre invariant locus} to be 
    \[
    \spc^\ser \perf X := \{\cal P \in \spc_\vartriangle \perf X \mid \bb S(\cal P) = \cal P\} \subset \spc_\vartriangle \perf X.
    \]
    Here, since each point in $\spc_\vartriangle \perf X$ is a certain thick subcategory of $\perf X$, we see that the notation indeed makes sense. Note in particular that the underlying topological space of the Balmer spectrum is determined solely by the triangulated category structure of $\perf X$ in this case. 
\end{itemize}

Therefore, the following result is enough to complete the proof of Theorem \ref{thm: bondal-orlov-ballard}. 
\begin{theorem}[Theorem \ref{theorem: Balmer is open in Matsui}]\label{thm: intro open}
    If $X$ is a quasi-projective scheme over an algebraically closed field, then the inclusion
    \[
    \spc_{\tens_X^\bb L} \perf X \subset \spc_\vartriangle \perf X
    \]
    is open. In particular, we have an isomorphism of ringed spaces:
    \[
    (\spc_{\tens_X^\bb L} \perf X, \ecal O_{\perf X, \vartriangle}|_{\spc_{\tens_X^\bb L} \perf X}) \iso \spec_{\tens_X^\bb L} \perf X \iso X.
    \]
\end{theorem}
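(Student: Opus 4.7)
The plan is to first reduce the theorem to the purely topological statement that $\spc_\tens \perf X$ is open in $\spc_\vartriangle \perf X$. Once this is in hand, the open-immersion principle recalled in the introduction yields an open immersion $(\spec_\tens \perf X)_\red \inj \spec_\vartriangle \perf X$; since $X$ is a quasi-projective variety and hence reduced, $(\spec_\tens \perf X)_\red \iso \spec_\tens \perf X \iso X$ by Balmer's reconstruction, and restricting the structure sheaf of $\spec_\vartriangle \perf X$ to $\spc_\tens \perf X$ identifies it with $\ecal O_X$.

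\noindent\textbf{Local criterion.} Basic open subsets of $\spc_\vartriangle \perf X$ are of the form $U(E) := \{\cal Q \in \spc_\vartriangle \perf X : E \in \cal Q\}$ for $E \in \perf X$, so I will establish openness via the following local criterion: for each $\cal P \in \spc_\tens \perf X$, exhibit a witness $E \in \cal P$ with the property that every prime thick subcategory containing $E$ is automatically a tensor ideal. Covering $\spc_\tens \perf X$ by the corresponding basic opens $U(E)$ of $\spc_\vartriangle \perf X$ then gives openness.

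\noindent\textbf{Using quasi-projectivity.} Fix an ample line bundle $\cal L$ on $X$. Since $\perf X$ is generated as a thick subcategory by the twists $\{\cal L^{\tens n}\}_{n \in \integ}$, a prime thick subcategory $\cal Q$ is a tensor ideal if and only if it is $\cal L$-invariant, i.e.\ $\cal L \tens^{\bb L} \cal Q = \cal Q$. In this language, $\spc_\tens \perf X$ is exactly the fixed locus in $\spc_\vartriangle \perf X$ of the self-homeomorphism $\phi_\cal L$ induced by $\cal L \tens^{\bb L} -$, and the task becomes showing this fixed locus is open. I plan to do this by constructing, for each $\cal P_x \in \spc_\tens \perf X$ attached to a point $x \in X$, a witness $E_x \in \cal P_x$---built from Koszul-type complexes at $x$ or from structure sheaves of suitable subvarieties cutting $x$ out---whose membership in a prime thick subcategory forces $\phi_\cal L$-invariance.

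\noindent\textbf{The main obstacle.} The hardest step will be verifying this rigidity property: a prime thick subcategory $\cal Q$ containing the chosen $E_x$ must be $\cal L$-invariant, ruling out the "exotic" prime thick subcategories of $\perf X$ (for example, those coming from exceptional collections, tilting, or Fourier-Mukai equivalences) that populate $\spc_\vartriangle \perf X$ outside of $\spc_\tens \perf X$. I expect this to rely on algebraic closedness of the base field, so that closed points have residue field $k$ and admit well-behaved Koszul generators, together with a generation argument relating thick closures of $x$-localized objects to the $\cal L$-tensoring action. Assembling the resulting $U(E_x)$ into an open cover of $\spc_\tens \perf X$ contained in $\spc_\tens \perf X$ concludes the openness claim and hence, via the reduction above, the theorem.
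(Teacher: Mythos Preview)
Your overall plan matches the paper's: reduce everything to the openness assertion, translate ``$\cal P$ is a prime $\tens$-ideal'' into ``$\cal P$ is $\cal L$-invariant'' for an ample line bundle $\cal L$ (this is exactly Lemma~\ref{lemma: hirano-ouchi}(ii)), and cover $\spc_{\tens_X^{\bb L}}\perf X$ by basic opens $U(E)$ whose defining objects force $\cal L$-invariance. But your proposal stops precisely where the content lies. The ``main obstacle'' paragraph is an honest admission that you have not constructed the witnesses or proved the rigidity; what you do suggest is pointed in the wrong direction. A Koszul-type complex at $x$, or the structure sheaf of a subvariety through $x$, is supported at $x$ and therefore does \emph{not} lie in $\cal S_X(x)=\{\ecal F:\ecal F_x\simeq 0\}$, so it cannot serve as a witness for $\cal P_x$.

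The paper's witness is the opposite: for a very ample $\cal L$ one picks a hyperplane section $H$ \emph{avoiding} $x$ (here the algebraically closed, hence infinite, base field is used) and takes
\[
\ecal F_H=\ecal O_H\oplus(\ecal O_H\tens_X^{\bb L}\ecal O_X(H))\oplus\cdots\oplus(\ecal O_H\tens_X^{\bb L}\ecal O_X(nH)),\qquad n=\dim X.
\]
The rigidity step then has a two-stage bootstrap that your outline does not anticipate. If a prime thick $\cal Q$ contains $\ecal F_H$, it contains $n{+}1$ consecutive $\cal L$-twists of $\ecal O_H$; Orlov's generation theorem (finitely many consecutive twists of a very ample bundle generate $\perf X$) then gives $\ecal O_H\tens_X^{\bb L}\ecal G\in\cal Q$ for every $\ecal G\in\perf X$. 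Feeding this into the triangle $\ecal O_X(-H)\tens\ecal G\to\ecal G\to\ecal O_H\tens\ecal G$ yields $\cal L^{-1}\tens\cal Q\subset\cal Q$, and iterating produces $n{+}1$ consecutive $\cal L^{-1}$-twists of every $\ecal F\in\cal Q$ inside $\cal Q$; a second application of Orlov's theorem makes $\cal Q$ a $\tens$-ideal. Without this specific choice of $\ecal F_H$ and the Orlov generation input, there is no mechanism in your sketch that rules out the exotic prime thick subcategories you worry about.

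A minor point: you justify dropping the reduction $(\spec_\tens\perf X)_{\sf red}$ by saying ``$X$ is a quasi-projective variety and hence reduced,'' but the theorem as stated is for quasi-projective \emph{schemes}; the paper's Theorem~\ref{theorem: Balmer is open in Matsui} accordingly only concludes $X_{\sf red}$ on the right-hand side.
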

Indeed, combining the theorem with aforementioned results, we see that if $X$ is a Gorenstein projective variety with (anti-)ample canonical bundle, then we can reconstruct $X$ by restricting the structure sheaf of the Matsui spectrum to the Serre invariant locus, i.e., 
    \[
    X \iso (\spc^\ser \perf X, \ecal O_{\perf X, \vartriangle}|_{\spc^\ser \perf X})
    \]
where the right-hand side only depends on the triangulated category structure of $\perf X$.  

Now, Theorem \ref{thm: intro open} has more consequences than just finishing up the new proof of the reconstruction theorem. In the rest of this paper, we will consider its implications in terms of the Fourier-Mukai locus introduced in \cite{ito2023gluing}. In particular, we observe that the structure sheaf on the Fourier-Mukai locus constructed in \cite{ito2023gluing} can be simply realized as the restriction of the structure sheaf of the Matsui spectrum (Theorem \ref{prop: structure sheaf of FM locus}). In other words, there is an open immersion
\[
\spec^\fm \perf X \inj \spec_\vartriangle \perf X
\]
of ringed spaces, where $\spec^\fm \perf X$ can be viewed as a scheme constructed by gluing copies of Fourier-Mukai partners of $X$ realized as Balmer spectra inside the Matsui spectrum. Let us note that the Fourier-Mukai locus contains various (birational) geometric information about the Fourier-Mukai partners as observed in \cite{ito2023gluing} (cf. Example \ref{example: FM locus}) and hence the categorical construction of the structure sheaf gives more paths to applications to geometry. 

Finally, we give an affirmative answer to the following conjecture \cite{ito2023gluing}*{\href{https://arxiv.org/pdf/2309.08147.pdf}{Conjecture 5.9}} on the Fourier-Mukai locus of an abelian variety. 
\begin{theorem}[Theorem \ref{main theorem: abelian}] \label{conj: FM locus paper}
Let $X$ be an abelian variety. Then, the Fourier-Mukai locus $\spec^\fm \perf X$ is the disjoint union of copies of Fourier-Mukai partners of $X$. In particular, connected components of $\spec^\fm \perf X$ are precisely the Fourier-Mukai partners of $X$. 
\end{theorem}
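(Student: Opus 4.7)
The plan is to leverage Theorems \ref{thm: intro open} and \ref{prop: structure sheaf of FM locus}, which together identify $\spec^\fm \perf X$ with an open ringed subspace of $\spec_\vartriangle \perf X$ obtained by gluing the Balmer spectra of Fourier-Mukai partners of $X$ along autoequivalences. Concretely, one views
\[
\spec^\fm \perf X \;=\; \bigcup_{Y}\bigcup_{\Phi}\; \Phi^{-1}\bigl(\spc_{\tens_Y^\bb L}\perf Y\bigr) \;\subset\; \spc_\vartriangle \perf X,
\]
where $Y$ ranges over Fourier-Mukai partners of $X$ and $\Phi$ over triangulated equivalences $\perf X \to \perf Y$. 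By Theorem \ref{thm: intro open}, each constituent is an open subspace of $\spc_\vartriangle \perf X$, connected, and isomorphic as a ringed space to the corresponding abelian variety $Y$. It therefore suffices to prove that any two constituents are either equal or disjoint, which after composing with $\Phi$ reduces to the following dichotomy for every triangulated equivalence $\Psi:\perf Y \to \perf{Y'}$ between Fourier-Mukai partners of $X$:
\[
\Psi\bigl(\spc_{\tens_Y^\bb L}\perf Y\bigr) \cap \spc_{\tens_{Y'}^\bb L}\perf{Y'} \;\in\; \bigl\{\,\emptyset,\; \spc_{\tens_{Y'}^\bb L}\perf{Y'}\,\bigr\}.
\]

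To establish this dichotomy, I would appeal to Orlov's classification of equivalences between derived categories of abelian varieties: any such $\Psi$ is a Fourier-Mukai transform whose kernel is controlled, up to composition with tensor-type autoequivalences of $\perf Y$ and $\perf{Y'}$, by a symplectic isomorphism $\gamma: Y\times\hat Y \to Y'\times\hat{Y'}$. The equivalence $\Psi$ is \emph{geometric}---i.e.\ of the form $\phi_*(-\tens L)[n]$ for some isomorphism $\phi: Y \to Y'$ of abelian varieties, line bundle $L\in\pic(Y')$, and shift $n\in\bb Z$---precisely when $\gamma$ preserves the Lagrangian decomposition given by $Y\times\{0\}$ and $\{0\}\times\hat Y$. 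In the geometric case the two Balmer spectra coincide under $\Psi$, giving the equality in the dichotomy; in all other cases, I claim the intersection is empty.

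The technical heart of the argument, and the principal obstacle, is verifying the emptiness claim in the non-geometric case. My plan is to exploit that a non-geometric $\gamma$ forces every skyscraper $k(p)$ on $Y$ to be mapped under $\Psi$ to a shift of a semi-homogeneous vector bundle supported on a positive-dimensional abelian subvariety $B_p\subset Y'$, and then to combine this with Balmer's support-theoretic description of tensor primes to conclude that $\Psi(\cal P_p)$ can never equal a tensor prime $\cal P_q=\{F\in\perf{Y'} : q\notin\supp F\}$: any such equality would force the support of $\Psi(k(p))$ to be the single point $\{q\}$, contradicting $\dim B_p>0$. Once this dichotomy is established, the disjointness of non-equal constituents, combined with their connectedness and their identification with Fourier-Mukai partners, yields that each connected component of $\spec^\fm \perf X$ is a single copy $\spc_{\tens_Y^\bb L}\perf Y \iso Y$, completing the proof.
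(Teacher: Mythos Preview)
Your reduction is sound and matches the paper's: the disjoint-union statement follows once one shows that for every equivalence $\Psi:\perf Y\to\perf{Y'}$ between abelian varieties, either all skyscrapers map to skyscrapers or none do. The paper packages this as tt-separatedness via Lemma~\ref{lem: tt-sep} (together with Lemma~\ref{lem: Ito} for the case $Y\not\cong Y'$), which is exactly your dichotomy.

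The gap is in your proposed proof of the dichotomy itself. First, the characterization ``$\Psi$ is geometric iff $\gamma$ preserves the Lagrangian decomposition $Y\times\{0\}$ and $\{0\}\times\hat Y$'' is false as stated: for $\Psi=(-)\otimes_{\ecal O_Y} L$ with $L$ ample one has $\gamma(\Psi)=\bigl(\begin{smallmatrix}1&0\\\phi_L&1\end{smallmatrix}\bigr)$, which does not preserve $Y\times\{0\}$, yet $\Psi$ visibly sends skyscrapers to skyscrapers; the correct condition is $f_2=0$. Second, and more seriously, the assertion that a non-geometric $\Psi$ sends \emph{every} skyscraper to an object with positive-dimensional support is precisely the substantive content you must establish, and you offer no argument for it. Orlov's explicit semi-homogeneous kernels are constructed only for \emph{elementary} symplectic isomorphisms (those with $f_2$ an isogeny), so when $f_2\neq 0$ fails to be an isogeny there is no ready-made description of $\Psi(k(p))$ of the shape you invoke.

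The paper sidesteps this case analysis entirely by proving the contrapositive directly: assuming $\Phi(k(x_0))\cong k(y_0)$, it invokes the transformation rule in Theorem~\ref{thm: Orlov abelian}---that $f_{\ecal K}(a,\alpha)=(b,\beta)$ forces $t_{(a,0)}^*\ecal K\cong t_{(0,-b)}^*\ecal K\otimes\pi_2^*\ecal L_\beta\otimes(\pi_1^*\ecal L_\alpha)^{-1}$---and restricts both sides to the fibre $\{x_0\}\times X$ to read off $\Phi(k(x_0+a))\cong k(y_0+b)$ for every $a$. That short computation with the kernel is the missing idea your structural analysis of the non-geometric case would have to reproduce.
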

In particular, this result together with Theorem \ref{thm: intro open} tells us that if we can identify the underlying topological space of the Fourier-Mukai locus inside the Matsui spectrum purely categorically, we can reconstruct all the Fourier-Mukai partners of abelian varieties as connected components of the Fourier-Mukai locus. 
\section{Preliminaries}
\begin{notation}
    Let $k$ be a field. We assume $k$ is algebraically closed unless otherwise specified. Throughout this paper, a triangulated category is assumed to be $k$-linear and \textbf{essentially small} (i.e., having a set of isomorphism classes of objects) and functors/structures are assumed to be $k$-linear. A variety is an integral scheme of finite type over $k$. {For a variety, points refer to closed points.} Moreover, any ring and scheme are assumed to be over $k$ and for a scheme $X$, let $\perf X$ denote the derived category of perfect complexes on $X$ and let
    \[
    \tens_X^\bb L : = \tens_{\ecal O_X}^\bb L
    \] 
    denote the usual derived tensor product on $\perf X$. 
\end{notation}
\begin{definition} Let $\cal T$ be a triangulated category.
\begin{enumerate}
    \item Let $\Th \cal T$ denote the poset of thick subcategories of $\cal T$ by inclusions. We define the \textbf{Balmer topology} on $\Th \cal T$ by setting open sets to be
    \[
    U(\cal E) := \{\cal I \in \Th \cal T \mid \cal I \cap \cal E \neq \emp\}
    \]
    for each collection $\cal E$ of objects in $\cal T$. 
    \item We say a thick subcategory $\cal P$ is a \textbf{prime thick subcategory} if the subposet 
    \[
    \{\cal I \in \Th \cal T \mid \cal I \supsetneq \cal P\} \subset \Th \cal T
    \]
    has the smallest element. Define the \textbf{Matsui spectrum}
    \[
    \spc_\vartriangle \cal T \subset \Th \cal T
    \]
    to be the subspace consisting of prime thick subcategories. We can equip the Matsui spectrum with a ringed space structure and let $\spec_\vartriangle \cal T = (\spc_\vartriangle \cal T,\ecal O_{\cal T,\vartriangle})$ denote the ringed space. 
    {Here, the structure sheaf $\ecal O_{\cal T,\vartriangle}$ is defined as the sheafification of the presheaf
    \[
    \spc_\vartriangle \cal T \supseteq U \mapsto Z\left(\cal T/\bigcap_{\cal P \in U} \cal P\right),
    \]
    where
    $$
    Z(\cal S):=\{\text{natural transformations $\eta:\id_{\cal S} \to \id_{\cal S}$ with $\eta[1] = [1]\eta$}\}
    $$ 
    denotes the center of $\cal S$ for a triangulated category $\cal S$. See \cite{matsui2023triangular} for the detailed construction.}
    
    \item We say a symmetric monoidal category $(\cal T,\tens{,\mathbf{1}})$ is a \textbf{tensor triangulated category} (\textbf{tt-category} in short) if the bifunctor $\tens:\cal T \times \cal T \to \cal T$ is triangulated in each variable, which is called a \textbf{tt-structure} on $\cal T$. A thick subcategory $\cal I \subset \cal T$ is said to be a \textbf{$\tens$-ideal} if, for any $F \in \cal T$ and $G \in \cal I$, we have $F\tens G \in \cal I$. A $\tens$-ideal $\cal P$ is said to be a \textbf{prime $\tens$-ideal} if $F \tens G \in \cal P$ implies $F \in \cal P$ or $G \in \cal P$. Define the \textbf{Balmer spectrum} 
    \[
    \spc_\tens \cal T \subset \Th \cal T
    \]
    to be the subspace consisting of prime $\tens$-ideals of $(\cal T, \tens)$. 
    We can equip the Balmer spectrum with a ringed space structure and let $\spec_\tens \cal T = (\spc_\tens \cal T,\ecal O_{\cal T, \tens})$ denote the ringed space. 
    {Here, the structure sheaf $\ecal O_{\cal T, \otimes}$ is defined as the sheafification of the presheaf
    \[
    \spc_\tens \cal T \supseteq U \mapsto \End_{\cal T/ \bigcap_{\cal P \in U}\cal P}(\mathbf{1}).
    \]
    See \cite{Balmer_2005} for the detailed construction.}
\end{enumerate}
\end{definition}
In the algebro-geometric setting, we have the following results:
\begin{theorem}[\cite{Balmer_2005}*{\href{https://arxiv.org/pdf/math/0409360.pdf}{Theorem 6.3}}, \cite{matsui2023triangular}*{\href{https://arxiv.org/pdf/2301.03168.pdf}{Theorem 2.12, Corollary 4.7}}]\label{thm: matsui23}
    Let $X$ be a noetherian scheme (over $\bb Z$). Then, we have the following assertions:
    \begin{enumerate}
        \item There is a canonical isomorphism $X \iso \spec_{\tens_X^\bb L} \perf X$ of ringed spaces whose underlying map is given by sending a (not necessarily closed) point $x \in X$ to a thick subcategory
        \[
        \cal S_X(x):= \{\ecal F \in \perf X \mid \ecal F_x \iso 0 \text{ in $\perf \ecal O_{X,x}$}\} \subset \perf X;
        \]
        \item {A $\tens_X^\bb L$-ideal of $\perf X$ is a prime thick subcategory if and only if it is a prime $\tens_X^\bb L$-ideal.}
        %prime thick subcategoryとprime idealの対比を強調するためにこの形で書きましたが， A $\tens_X^\bb L$-idealの部分が若干気持ち悪いのでもう少しよい書き方があるかもしれません 
        %私としてはご提案の通りで問題ないかと思います。
        %\old{A prime thick subcategory of $\perf X$ is a prime $\tens_X^\bb L$-ideal if and only if it is $\tens_X^\bb L$-ideal. }
        \item There is a morphism 
        \[
        i: \spec_{\tens_X^\bb L}\perf X \to \spec_\vartriangle \perf X
        \]
        of ringed spaces whose underlying continuous map is the inclusion;
        \item Suppose $\spc_{\tens_X^\bb L}\perf X$ is open in $\spc_\vartriangle \perf X$. Then, the morphism 
        \[
        i: (\spec_{\tens_X^\bb L}\perf X)_\sf{red} \to \spec_\vartriangle \perf X
        \]
        is an open immersion of ringed spaces. 
    \end{enumerate}
\end{theorem}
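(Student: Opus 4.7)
The plan is to establish the four assertions in order, starting from Thomason's classification of thick tensor ideals and building up to the comparison of structure sheaves. For (i), I invoke Thomason's theorem: for $X$ noetherian, $\cal I \mapsto \bigcup_{\ecal F \in \cal I}\supp \ecal F$ gives a bijection between thick $\tens_X^\bb L$-ideals of $\perf X$ and specialization-closed subsets of $X$, with inverse $Z \mapsto \{\ecal F : \supp \ecal F \subseteq Z\}$. Prime $\tens$-ideals correspond to irreducible closed subsets, hence (by taking generic points) to points of $X$, with the inverse sending $x$ to $\cal S_X(x)$. The formula $\supp(\ecal F \tens_X^\bb L \ecal G) = \supp \ecal F \cap \supp \ecal G$ gives the homeomorphism, matching Balmer's basic closed sets $\supp(\ecal F)$ with the Zariski-closed subsets of $X$. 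For the structure sheaf, since $\bigcap_{x \in U}\cal S_X(x) = \{\ecal F : \ecal F|_U = 0\}$, Thomason--Neeman localization identifies the Verdier quotient $\perf X/\bigcap_{x \in U}\cal S_X(x)$ with $\perf U$ up to idempotent completion; thus $\End(\mathbf{1}) = \ecal O_X(U)$ in the quotient, and sheafifying recovers $\ecal O_X$.

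For (ii), if $\cal P$ is a prime $\tens$-ideal then by (i), $\cal P = \cal S_X(x)$ and $\perf X/\cal P$ is equivalent (after idempotent completion) to $\perf \ecal O_{X,x}$; a direct analysis shows $\perf \ecal O_{X,x}$ admits a smallest nonzero thick subcategory generated by the residue field complex, whose preimage via the Verdier quotient is the smallest thick subcategory of $\perf X$ strictly containing $\cal P$, so $\cal P$ is a prime thick subcategory. Conversely, if $\cal I$ is a $\tens$-ideal that is a prime thick subcategory, its smallest strictly larger thick subcategory is generated over $\cal I$ by a single object whose support must be irreducible, forcing $\cal I = \cal S_X(x)$ via Thomason's classification and thus a prime $\tens$-ideal. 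For (iii), the set-theoretic inclusion is provided by (ii), and continuity is automatic since both source and target inherit the Balmer topology from $\Th \perf X$; to upgrade to a morphism of ringed spaces I apply the canonical ring homomorphism $Z(\cal S) \to \End_\cal S(\mathbf{1})$, $\eta \mapsto \eta_{\mathbf{1}}$, available for any triangulated category with a distinguished unit object, to the Verdier quotients $\perf X/\bigcap_{\cal P \in U}\cal P$ defining the two presheaves, then sheafify to obtain the desired morphism $i^{-1}\ecal O_{\perf X, \vartriangle} \to \ecal O_{\perf X, \tens}$.

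For (iv), under the openness hypothesis the morphism from (iii) restricts to a morphism $\ecal O_{\perf X, \vartriangle}|_{\spc_\tens \perf X} \to \ecal O_{\perf X, \tens}$ on the Balmer spectrum, and at each stalk $\cal S_X(x)$ the induced map identifies with $Z(\perf \ecal O_{X,x}) \to \End_{\perf \ecal O_{X,x}}(\ecal O_{X,x}) = \ecal O_{X,x}$. The main obstacle will be verifying that this stalk map is surjective with nilpotent kernel, so that after passing to the reduced structure on the Balmer side it becomes an isomorphism of local rings, yielding the open immersion. Surjectivity is immediate via the multiplication-by-$r$ natural transformations for $r \in \ecal O_{X,x}$, but the center $Z(\perf \ecal O_{X,x})$ can harbor natural transformations beyond such multiplications, and one must verify that every such extra element is nilpotent. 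A secondary technical point at (ii) is controlling the ambient poset of \emph{all} thick subcategories of $\perf \ecal O_{X,x}$ (not merely the tensor ideals) to ensure the existence of a smallest nonzero thick subcategory; this is more combinatorial and can be handled by an explicit construction using the residue field complex.
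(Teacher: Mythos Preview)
The paper does not prove this theorem at all: it is stated with citations to \cite{Balmer_2005} and \cite{matsui2023triangular} and no proof is given. So there is no ``paper's own proof'' to compare against; you are reconstructing arguments that live in those references. That said, here is an assessment of your sketch on its own terms.

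Your outline for (i) and (iii) is the standard one and is fine. For (ii), the forward direction is essentially right, but the phrase ``generated by the residue field complex'' is inaccurate: $k(x)$ is perfect over $\ecal O_{X,x}$ only when the local ring is regular. The clean way is to note that for a commutative noetherian local ring $R$ the unit generates $\perf R$, so \emph{every} thick subcategory is a $\tens$-ideal; then Thomason's classification for $\spec R$ immediately gives a unique smallest nonzero thick subcategory (the complexes supported at $\fr m$, generated for instance by a Koszul complex on a system of parameters). For the converse in (ii), your claim that the generator of the minimal over-category ``must have irreducible support'' is not justified and I do not see how to make it work directly. A correct argument: if the $\tens$-ideal $\cal I=\cal I_Z$ is not prime, then $X\setminus Z$ has two distinct points $x_1,x_2$ all of whose proper specializations lie in $Z$; then $\cal I_{Z\cup\{x_1\}}$ and $\cal I_{Z\cup\{x_2\}}$ both strictly contain $\cal I$ while $\cal I_{Z\cup\{x_1\}}\cap\cal I_{Z\cup\{x_2\}}=\cal I$, so no smallest thick over-category exists.

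For (iv) you correctly isolate the crux (surjectivity with nilpotent kernel of the evaluation $Z\to\End(\mathbf 1)$ on stalks) but do not resolve it, and you should be aware that your identification of the stalk of $\ecal O_{\perf X,\vartriangle}$ with $Z(\perf\ecal O_{X,x})$ is not automatic: the stalk is a filtered colimit $\colim_{V\ni x}Z(\perf V)$, and one must argue separately that the center commutes with this particular colimit (or avoid the issue by working with affine neighbourhoods directly). The actual proof in \cite{matsui2023triangular} handles (iv) by a more careful sheaf-level comparison rather than a bare stalk computation; if you want to flesh this out, that reference is where to look.
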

In \cite{matsui2023triangular}, it is shown that a quasi-affine scheme satisfies the supposition in part (iv). In the next section, we show that the supposition holds more generally if $X$ is a quasi-projective scheme over $k$. 
\begin{remark}\label{remark:k-linear}
    Let $X$ be a noetherian scheme over $k$. We observe that Theorem \ref{thm: matsui23} extends to this relative setting in the following sense.
    \begin{enumerate}
        \item By definition, the structure sheaf $\ecal O_{\perf X, \tens_X^\bb L}$ of the Balmer spectrum is naturally a sheaf of $k$-algebras and therefore the scheme $\spec_{\tens_X^\bb L}\perf X$ has a canonical $k$-scheme structure. 
        Now, since for a $k$-algebra $A$, the canonical ring isomorphism $$\End_{\perf A}(A) \iso \Gamma(\spec A, \ecal O_{\spec A})\iso A$$ respects $k$-algebra structures, we see that the canonical isomorphism in Theorem \ref{thm: matsui23} (i)
        \[
        \spec_{\tens_X^\bb L}\perf X \iso X
        \]
        is an isomorphism of $k$-schemes. 
        \item Similarly, {the center of a $k$-linear triangulated category has the canonical structure of a $k$-algebra so that} $\spec_\vartriangle \perf X$ is naturally a ringed space over $\spec k$. Now, for a $k$-algebra $A$, the canonical evaluation ring homomorphism
        \[
        Z(\perf A) \to \End_{\perf A}(A), \quad \eta \mapsto \eta_A
        \]
        respects $k$-algebra structures. 
        Since the open immersion $$i:(\spec_{\tens_X^\bb L}\perf X)_\sf{red} \inj \spec_\vartriangle \perf X$$ in Theorem \ref{thm: matsui23} (iv) is essentially coming from the evaluation maps above, we see that $i$ is an open immersion of ringed spaces over $\spec k$. 
    \end{enumerate}
\end{remark}
% k線型同値が松井スペクトラムのk同型を誘導することはTheorem 2.3とはあまり関係ないので証明のタイミングで追記しました。
\section{A new proof of Bondal-Orlov reconstruction}
First of all, let us see the following observation essentially made in \cite{HO22}*{\href{https://arxiv.org/pdf/2112.13486}{Proposition 5.3}}.
\begin{lemma}\label{lemma: hirano-ouchi}
    Let $X$ be a quasi-projective scheme of dimension $n$ over $k$ and take a line bundle $\ecal L$ on $X$. Then, the following hold:
    \begin{enumerate}
        \item Take a thick subcategory $\cal I \subset  {\perf X}$ and an object $\ecal F \in \cal I$. If {$\ecal L$ is very ample and} there exists $d \in \bb Z$ such that 
        \[
        \ecal F \tens_X^\bb L \ecal L^{\tens d},\ \ecal F \tens_X^\bb L \ecal L^{\tens (d+1)},\ \dots,\ \ecal F \tens_X^\bb L \ecal L^{\tens (d+n)} \in \cal I, 
        \]
        then for any $\ecal G \in {\perf X}$, we have
        \[
        \ecal F \tens_X^\bb L \ecal G \in \cal I.
        \]
        \item {Assume that $\ecal L$ or $\ecal L^{\tens -1}$ is ample.} A prime thick subcategory $\cal P \in \spc_\vartriangle \perf X$ is a prime $\tens$-ideal if and only if 
        \[
        \cal P \tens_X^\bb L \ecal L = \cal P.
        \]
    \end{enumerate}
\end{lemma}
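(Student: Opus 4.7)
Both parts hinge on the following generation statement: for $X$ quasi-projective of dimension $n$ with a very ample line bundle $\mathcal L$, any $n+1$ consecutive powers $\mathcal L^{\otimes d}, \mathcal L^{\otimes(d+1)}, \ldots, \mathcal L^{\otimes(d+n)}$ generate $\perf X$ as a thick subcategory, for every $d \in \mathbb Z$. Granting this, part (i) is a formal consequence of the exactness of $\mathcal F \tens_X^\bb L -$, and part (ii) reduces to (i) by passing to a sufficiently large power of $\mathcal L$.

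To establish the generation claim needed for (i), I would proceed in two steps. The first uses only that $\mathcal L$ is very ample on a quasi-projective $X$: by a Serre-type argument, every perfect complex is quasi-isomorphic to a bounded complex of vector bundles, each of which is a direct summand of some $(\mathcal L^{\otimes k})^{\oplus m}$ with $k \ll 0$; hence $\{\mathcal L^{\otimes k}\}_{k\in\mathbb Z}$ generates $\perf X$ as a thick subcategory. The second step is a Koszul complex argument: very ampleness combined with $\dim X = n$ allows one to pick $n+1$ global sections $s_0,\ldots,s_n$ of $\mathcal L$ with empty common zero locus (after embedding $X \hookrightarrow \mathbb P^N$, a generic choice of $n+1$ linear forms works by a dimension count), producing an exact sequence
\[
0 \to \mathcal L^{\otimes -(n+1)} \to (\mathcal L^{\otimes -n})^{\oplus\binom{n+1}{n}} \to \cdots \to (\mathcal L^{\otimes -1})^{\oplus\binom{n+1}{1}} \to \mathcal O_X \to 0.
\]
Twisting this sequence by arbitrary powers of $\mathcal L$ and splicing into exact triangles shows that both $\mathcal L^{\otimes(d-1)}$ and $\mathcal L^{\otimes(d+n+1)}$ lie in the thick subcategory generated by $\{\mathcal L^{\otimes d},\ldots,\mathcal L^{\otimes(d+n)}\}$, and induction pushes this to every twist. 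Combined with the first step, the generation claim follows. Finally, for arbitrary $\mathcal G \in \perf X$, the exactness of $\mathcal F \tens_X^\bb L -$ (which holds because $\mathcal F$ is perfect) transports a presentation of $\mathcal G$ built from $\{\mathcal L^{\otimes i}\}_{i=d}^{d+n}$ into one of $\mathcal F \tens_X^\bb L \mathcal G$ built from $\{\mathcal F \tens_X^\bb L \mathcal L^{\otimes i}\}_{i=d}^{d+n} \subset \mathcal I$, placing $\mathcal F \tens_X^\bb L \mathcal G$ in $\mathcal I$.

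For (ii), the direction ``$\Rightarrow$'' is immediate: any $\tens_X^\bb L$-ideal is closed under tensoring with both $\mathcal L$ and $\mathcal L^{\otimes -1}$, which forces $\mathcal P \tens_X^\bb L \mathcal L = \mathcal P$. For ``$\Leftarrow$'', I may replace $\mathcal L$ by $\mathcal L^{\otimes -1}$ if needed and assume $\mathcal L$ is ample, and then choose $m \gg 0$ with $\mathcal L^{\otimes m}$ very ample. Iterating the hypothesis $\mathcal P \tens_X^\bb L \mathcal L = \mathcal P$ yields $\mathcal F \tens_X^\bb L \mathcal L^{\otimes k} \in \mathcal P$ for every $\mathcal F \in \mathcal P$ and every $k \in \mathbb Z$, so the hypothesis of (i) with $\mathcal L$ replaced by the very ample bundle $\mathcal L^{\otimes m}$ (and any $d$) is automatically satisfied. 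Part (i) then gives $\mathcal F \tens_X^\bb L \mathcal G \in \mathcal P$ for every $\mathcal G \in \perf X$, so $\mathcal P$ is a $\tens_X^\bb L$-ideal, and Theorem \ref{thm: matsui23}(ii) upgrades this to primeness as a $\tens_X^\bb L$-ideal.

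The main obstacle is the generation step; within it, the Koszul propagation and the final tensor-and-transport arguments are routine, so the genuine content lies in producing $n+1$ sections of $\mathcal L$ with empty common zero locus (this is where very ampleness and the dimension bound $n$ enter together) and in handling the Serre-style resolution on a possibly non-smooth quasi-projective $X$ by working with direct summands of $(\mathcal L^{\otimes k})^{\oplus m}$ rather than twists themselves. Both can be handled by standard quasi-projective techniques.
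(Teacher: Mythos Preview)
Your approach is correct and essentially the same as the paper's: both reduce (i) to the fact that $n+1$ consecutive twists of a very ample $\mathcal{L}$ generate $\perf X$ as a thick subcategory, with the paper simply citing Orlov's generation theorem for quasi-projective schemes rather than sketching the Koszul/Serre-resolution argument you outline. For (ii) the paper likewise iterates to $\mathcal{P} \otimes_X^{\mathbb{L}} \mathcal{L}^{\otimes d} = \mathcal{P}$ for all $d$ and then invokes (i) together with Theorem~\ref{thm: matsui23}(ii), leaving implicit the passage to a very ample power that you spell out.
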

\begin{proof} For part (i), take a thick subcategory $\cal I \subset \perf X$ and an object $\ecal F \in \cal I$ and suppose that there exists $d \in \bb Z$ such that 
        \[
        \ecal F \tens_X^\bb L \ecal L^{\tens d},\ \ecal F \tens_X^\bb L \ecal L^{\tens (d+1)},\ \dots,\ \ecal F \tens_X^\bb L \ecal L^{\tens (d+n)} \in \cal I.
        \]
         {Then the subcategory $\cal X := \{\ecal G \in \perf X \mid \ecal F \tens_X^\bb L \ecal G \in \cal I\}$ is a thick subcategory of $\perf X$ containing 
        \[
        \ecal L^{\tens d},\ \ecal L^{\tens(d+1)},\ \dots,\ \ecal L^{\tens{(d+n)}}.
        \]
        By \cite{Orlov_dimension}*{\href{https://arxiv.org/pdf/0804.1163.pdf}{Theorem 4}}, a thick subcategory containing $\ecal L^{\tens d},\ \ecal L^{\tens(d+1)},\ \dots,\ \ecal L^{\tens{(d+n)}}$ needs to be $\perf X$. 
        Therefore, we see that $\cal X = \perf X$; that is 
        \[
        \ecal F \tens_X^\bb L \ecal G \in \cal I
        \]
        for any $\ecal G \in {\perf X}$.}
        %投稿する雑誌にもよると思いますが， エディターやレフェリーにどのような分野の人がつくか分からないので多少説明を加えました．
        
       For part (ii), note that for any $\tens$-ideal $\cal P$ in $(\perf X, \tens_{X}^\bb L)$, we clearly have
        \[
        \cal P \tens_X^\bb L \ecal L \subset \cal P
        \]
        and the inclusion is the equality since we also have $\cal P \tens_X^\bb L \ecal L^{\tens -1} \subset \cal P$. 
        If $\cal P \tens_X^\bb L \ecal L = \cal P$ holds, then $\cal P \tens_X^\bb L \ecal L^{\otimes d} = \cal P$ for any $d \in \bb Z$. Thus the converse follows from part (i) and Theorem \ref{thm: matsui23} (ii).
\end{proof}
Now, we show our main theorem on the topology of the Matsui spectrum.

\begin{theorem}\label{theorem: Balmer is open in Matsui}
    Let $X$ be a quasi-projective scheme of dimension $n$ over $k$. Then the inclusion
    \[
    \spc_{\tens_X^\bb L} \perf X \subset \spc_\vartriangle \perf X
    \]
    is open. In particular, we have an isomorphism of ringed spaces:
    \[
    (\spc_{\tens_X^\bb L} \perf X, \ecal O_{\perf X, \vartriangle}|_{\spc_{\tens_X^\bb L} \perf X}) \iso (\spec_{\tens_X^\bb L} \perf X)_{\sf{red}} \iso X_{\sf{red}}.
    \] 
\end{theorem}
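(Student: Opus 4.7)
Since $X$ is quasi-projective over $k$, pick a very ample line bundle $\ecal L$ on $X$. By Lemma \ref{lemma: hirano-ouchi}(ii), the Balmer spectrum is identified inside the Matsui spectrum as
\[
\spc_{\tens_X^\bb L}\perf X = \{\cal P \in \spc_\vartriangle\perf X \mid \cal P \otimes_X^\bb L \ecal L = \cal P\},
\]
so the problem reduces to verifying that this subset is open in the Balmer topology on $\spc_\vartriangle \perf X$. Once openness is known, the claimed isomorphism of ringed spaces follows immediately from Theorem \ref{thm: matsui23}(i), (iv).

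My plan for openness is to exhibit, for each Balmer prime $\cal P$, a basic Balmer-open neighborhood $U(\ecal F) \cap \spc_\vartriangle\perf X = \{\cal R \in \spc_\vartriangle\perf X \mid \ecal F \in \cal R\}$ contained in the Balmer locus. The natural candidate is
\[
\ecal F := \bigoplus_{i=0}^{n} \ecal F_0 \otimes_X^\bb L \ecal L^{\otimes i} \in \cal P,
\]
for a carefully chosen nonzero $\ecal F_0 \in \cal P$; this lies in $\cal P$ because $\cal P$ is a $\tens$-ideal. For any prime $\cal R$ containing this $\ecal F$, each direct summand $\ecal F_0 \otimes_X^\bb L \ecal L^{\otimes i}$ lies in $\cal R$ by thickness, so Lemma \ref{lemma: hirano-ouchi}(i) applied with seed $\ecal F_0$ yields $\ecal F_0 \otimes_X^\bb L \ecal G \in \cal R$ for every $\ecal G \in \perf X$.

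The main obstacle is promoting this principal-ideal-type condition to the full $\tens$-ideal property of $\cal R$. The natural tactic is to study the $\tens$-stabilizer $\cal R^{\mathrm{stab}} := \{H \in \perf X \mid H \otimes_X^\bb L \cal R \subset \cal R\}$, which is a thick subcategory of $\perf X$, and note that $\cal R$ is a $\tens$-ideal precisely when $\cal R^{\mathrm{stab}} = \perf X$. One then wants to leverage the condition $\ecal F_0 \otimes_X^\bb L \perf X \subset \cal R$ to force $n+1$ consecutive powers of $\ecal L$ into $\cal R^{\mathrm{stab}}$, so that Orlov's generation theorem (used already inside Lemma \ref{lemma: hirano-ouchi}(i)) gives $\cal R^{\mathrm{stab}} = \perf X$. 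Closing this loop is the technical heart of the argument: it likely requires choosing $\ecal F_0$ as a direct sum over enough generators of $\cal P$ and exploiting the primality of $\cal R$ to transfer $\ecal L$-invariance from the principal ideal generated by $\ecal F_0$ to all of $\cal R$.

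With openness in hand, Theorem \ref{thm: matsui23}(iv) provides the open immersion $(\spec_{\tens_X^\bb L}\perf X)_{\sf{red}} \hookrightarrow \spec_\vartriangle \perf X$, whence
\[
(\spc_{\tens_X^\bb L}\perf X, \ecal O_{\perf X, \vartriangle}|_{\spc_{\tens_X^\bb L}\perf X}) \iso (\spec_{\tens_X^\bb L}\perf X)_{\sf{red}} \iso X_{\sf{red}}
\]
using Theorem \ref{thm: matsui23}(i) for the last isomorphism.
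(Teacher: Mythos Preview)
Your overall strategy---produce, for each Balmer prime $\cal P$, an object $\ecal F\in\cal P$ such that $U(\ecal F)\cap\spc_\vartriangle\perf X$ is contained in the Balmer locus---is exactly the paper's, and your use of Lemma~\ref{lemma: hirano-ouchi}(i) to pass from the $n+1$ summands of $\ecal F$ to $\ecal F_0\otimes_X^{\bb L}\perf X\subset\cal R$ is fine. The gap is precisely where you flag it: getting from $\ecal F_0\otimes_X^{\bb L}\perf X\subset\cal R$ to the full $\tens$-ideal property of $\cal R$. Your proposed fixes (taking $\ecal F_0$ to be a direct sum over generators of $\cal P$, invoking primality of $\cal R$) do not work. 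Primality in the Matsui sense only says that the poset $\{\cal I\supsetneq\cal R\}$ has a smallest element; it gives no leverage for transferring $\ecal L$-stability from a principal ideal to all of $\cal R$. And enlarging $\ecal F_0$ within $\cal P$ does nothing to relate $\ecal F_0$ to $\ecal L$.

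What is missing is that $\ecal F_0$ must be chosen with a specific relationship to $\ecal L$, not merely as an arbitrary nonzero object of $\cal P$. The paper takes $\ecal F_0=\ecal O_H$ for a hyperplane section $H$ avoiding the point corresponding to $\cal P$; the whole argument hinges on the exact triangle
\[
\ecal L^{-1}\otimes_X^{\bb L}\ecal G \to \ecal G \to \ecal O_H\otimes_X^{\bb L}\ecal G \to \ecal L^{-1}\otimes_X^{\bb L}\ecal G[1]
\]
for $\ecal G\in\cal R$. Once Lemma~\ref{lemma: hirano-ouchi}(i) gives $\ecal O_H\otimes_X^{\bb L}\ecal G\in\cal R$, this triangle forces $\ecal L^{-1}\otimes_X^{\bb L}\ecal G\in\cal R$ for every $\ecal G\in\cal R$; iterating yields $n+1$ consecutive powers of $\ecal L$ in your $\cal R^{\mathrm{stab}}$, and then a second application of Lemma~\ref{lemma: hirano-ouchi}(i) (with $\ecal F=\ecal G$ ranging over $\cal R$) shows $\cal R$ is a $\tens$-ideal. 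So the ``closing the loop'' you sought is accomplished not by primality but by the Koszul-type triangle linking $\ecal O_H$, $\ecal O_X$, and $\ecal L^{-1}$. Replace your unspecified $\ecal F_0$ by $\ecal O_H$ and the proof goes through.
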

\begin{proof}
    Take a very ample line bundle $\ecal L$ on $X$ and fix a corresponding immersion $X \inj \bb P^N_k$
    for some $N$. In this proof, we say an effective Cartier divisor $H \subset X$ is a \textbf{hyperplane section} if there exists a hyperplane $\tilde H \subset \bb P^N_k$ {with $X \not\subseteq \tilde H$} such that $H = X \cap \tilde H$. For each hyperplane section $H \subset X$, define 
    \[
    \ecal F_H:= \ecal O_H \oplus (\ecal O_H \tens_X^\bb L \ecal O_X(H)) \oplus \cdots \oplus (\ecal O_H\tens_X^\bb L\ecal O_X(nH))
    \]
    and set
    \[
    \cal E_X := \{\ecal F_H \mid \text{$H\subset X$ is a hyperplane section}\}.
    \]
    Now, to see $\spc_{\tens_X^\bb L} \perf X$ is open in $\spc_\vartriangle \perf X$, we are going to show 
    \[
    \spc_{\tens_X^\bb L} \perf X = U(\cal E_X) \overset{\sf{def}}{=} \{\cal I \in \spc_\vartriangle \perf X \mid  \cal I \cap \cal E_X \neq \emp \}.
    \]
    The containment $\spc_{\tens_X^\bb L} \perf X \subset U(\cal E_X)$ follows since for any (not necessarily closed) point $x \in X$, there exists a hyperplane section $x \not \in H \subset X$ (as $k$ is assumed to be algebraically closed and hence infinite) and therefore the prime $\tens_X^\bb L$-ideal $\cal S_X(x)$ contains $\ecal F_H$. Conversely, take a prime thick subcategory $\cal P \in U(\cal E_X)$ and take $\ecal F_H \in \cal P$ for some hyperplane section $H \subset X$. Then, in particular, we have 
    \[
    \ecal O_H, \ecal O_H \tens_X^\bb L \ecal O_X(H), \dots, \ecal O_H\tens_X^\bb L\ecal O_X(nH) \in \cal P.
    \]
    %Now, recall by \cite{Orlov_dimension}*{\href{https://arxiv.org/pdf/0804.1163.pdf}{Theorem 4}}
    %こちらの結果ですがquasi-projectiveの場合に成立してるか少々怪しい(証明の終盤でFをベクトル束としている部分の行間を埋められませんでした)のでXがproperを仮定してしまってもいいかもしれません。Properを仮定すると導来圏が三角圏としてproperになるのでFがベクトル束でなくても同様の議論が成立するかと思います。
    As the line bundle $\ecal O_X(H) \iso \ecal L$ is very ample, we see from Lemma \ref{lemma: hirano-ouchi} (i) that
    \[
    \ecal O_H \tens_X^\bb L \ecal F \in \cal P
    \]
    {holds for any $\ecal F \in \cal P$} and hence 
    \[
    \ecal O_X(-H) \otimes_{X}^\bb L \ecal F \in \cal P
    \]
    by considering a distinguished triangle 
    \[
    \ecal O_{X}(-H)\tens_X^\bb L \ecal F \to \ecal O_{X} \tens_X^\bb L \ecal F \to \ecal O_{H} \tens_X^\bb L \ecal F \to \ecal O_{X}(-H)\tens_X^\bb L \ecal F[1]. 
    \]
    By iterating this process, we see that for any $\ecal F \in \cal P$, one has
    \[
    \ecal O_X (-H) \tens_X^\bb L \ecal F,  \ecal O_X (-2H) \tens_X^\bb L \ecal F, \dots,  \ecal O_X (-(n+1)H) \tens_X^\bb L \ecal F \in \cal P
    \]
    and therefore again by Lemma \ref{lemma: hirano-ouchi} (i), we see that for any $\ecal G \in \perf X$, $\ecal G \tens_X^\bb L \ecal F \in \cal P$. Thus, $\cal P$ is a $\tens$-ideal and by Theorem \ref{thm: matsui23} (ii), $\cal P$ is a prime $\tens$-ideal, i.e., $\cal P \in \spc_{\tens_X^\bb L}\perf X$. The later claim follows by Theorem \ref{thm: matsui23} (iv). 
\end{proof}
By Theorem \ref{theorem: Balmer is open in Matsui}, to reconstruct a reduced quasi-projective scheme over $k$, it is sufficient to identify the {underlying topological space of} corresponding Balmer spectrum in the Matsui spectrum, which is indeed done in \cite{HO22} in the case of Gorenstein projective varieties with (anti-)ample canonical bundle. Thus, we can give a new conceptually simple proof of the following version of the Bondal-Orlov reconstruction shown by Ballard (\cite{ballard2011derived}*{\href{https://arxiv.org/pdf/0801.2599.pdf}{Theorem 6.1}}). 
\begin{theorem}[Bondal-Orlov, Ballard]
    Let $X$ be a Gorenstein projective variety over $k$ with (anti-)ample canonical bundle. Then, the following assertions hold:
    \begin{enumerate}
        \item The scheme $X$ can be reconstructed solely from the triangulated category structure of $\perf X$.
        \item If there exists a Gorenstein projective variety $Y$ with $\perf X \simeq \perf Y$, then $X \iso Y$. 
    \end{enumerate} 
\end{theorem}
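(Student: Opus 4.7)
The plan is to assemble the reconstruction from three ingredients already recorded in the paper. First, Theorem \ref{theorem: Balmer is open in Matsui} shows that for any quasi-projective scheme $X$ over $k$, the Balmer spectrum sits inside the Matsui spectrum as an open subspace, and the restriction of the structure sheaf $\ecal O_{\perf X, \vartriangle}$ to this open subset recovers $X_{\sf{red}} = X$ (using that $X$ is a variety, hence reduced). Critically, the Matsui spectrum as a ringed space is manifestly an invariant of the triangulated structure of $\perf X$; what is not intrinsic to the triangulated structure is the identification of which open subspace is the Balmer spectrum.

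The second step is to identify $\spc_{\tens_X^\bb L} \perf X \subseteq \spc_\vartriangle \perf X$ purely in terms of triangulated data. For this I would invoke the result of Hirano--Ouchi recalled in the introduction: under the hypothesis that $X$ is Gorenstein projective with (anti-)ample canonical bundle, one has
\[
\spc_{\tens_X^\bb L} \perf X = \spc^\ser \perf X = \{\cal P \in \spc_\vartriangle \perf X \mid \bb S(\cal P) = \cal P\},
\]
where $\bb S$ is the Serre functor of $\perf X$. Since the Serre functor is characterized by a universal property depending only on the triangulated structure, and is unique up to natural isomorphism when it exists, the Serre invariant locus is a purely triangulated invariant of $\perf X$.

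Combining these inputs proves part (i): the ringed space
\[
(\spc^\ser \perf X,\; \ecal O_{\perf X, \vartriangle}|_{\spc^\ser \perf X})
\]
is built solely from the triangulated structure of $\perf X$ and is isomorphic to $X$ as a $k$-scheme (using Remark \ref{remark:k-linear}). For part (ii), a triangulated equivalence $\perf X \simeq \perf Y$ transports Serre functors, prime thick subcategories, centers, and hence the Matsui spectrum together with its structure sheaf and its Serre invariant locus covariantly. Applying part (i) to both sides therefore yields $X \iso Y$.

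The only point that requires care, rather than being a genuine obstacle, is the compatibility of the open immersion from Theorem \ref{thm: matsui23} (iv) with the equality of underlying spaces from \cite{HO22}, which ensures that restricting the Matsui structure sheaf to the Serre invariant locus agrees (up to nilpotents) with the Balmer structure sheaf and therefore with $X$. With Theorem \ref{theorem: Balmer is open in Matsui} in hand, this compatibility is automatic and no further argument is needed beyond invoking the cited theorems.
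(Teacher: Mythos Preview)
Your argument for part (i) matches the paper's proof exactly: combine the Hirano--Ouchi identification $\spc^\ser \perf X = \spc_{\tens_X^\bb L} \perf X$ with Theorem \ref{theorem: Balmer is open in Matsui} and Theorem \ref{thm: matsui23} to exhibit $X$ as the Serre invariant locus with the restricted Matsui structure sheaf.

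Your argument for part (ii), however, has a genuine gap. You write ``applying part (i) to both sides,'' but part (i) requires the (anti-)ampleness of the canonical bundle, and the statement only assumes $Y$ is Gorenstein projective. Without that hypothesis on $Y$, the Hirano--Ouchi equality $\spc^\ser \perf Y = \spc_{\tens_Y^\bb L}\perf Y$ is not available, so you cannot conclude that the Serre invariant locus of $\perf Y$ recovers $Y$. Transporting the Serre invariant locus across the equivalence only tells you that this locus, computed from $\perf Y$, is isomorphic to $X$; it does not by itself identify it with $Y$.

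The paper circumvents this asymmetry. Since $Y$ is Gorenstein, one still has the containment $\Phi^{-1}(\spc_{\tens_Y^\bb L}\perf Y) \subset \spc^\ser \perf X$ (this needs only that prime $\tens_Y^\bb L$-ideals are Serre invariant, not the reverse inclusion). Combined with Theorem \ref{theorem: Balmer is open in Matsui} for $Y$ and the identification $\spc^\ser\perf X = \spc_{\tens_X^\bb L}\perf X$ for $X$, this produces an open immersion $f:Y\hookrightarrow X$ of $k$-schemes. Properness of $Y$ then forces $f$ to be closed as well, hence an isomorphism since $X$ is connected. The extra topological step (open $+$ proper $\Rightarrow$ clopen $\Rightarrow$ iso) is precisely what compensates for the missing ampleness hypothesis on $\omega_Y$.
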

\begin{proof}
    For part (i), note from \cite{HO22}*{\href{https://arxiv.org/pdf/2112.13486.pdf}{Corollary 5.4}} that we have
        \[
        \spc^\ser \perf X = \spc_{\tens_X^\bb L} \perf X \subset \spc_\vartriangle \perf X
        \]
        where $\spc^\ser \perf X$ denote the Serre invariant locus, i.e., the subspace of prime thick subcategories $\cal P$ satisfying $\bb S(\cal P) = \cal P$ for the Serre functor $\bb S$. (The equality is indeed a direct consequence of Lemma \ref{lemma: hirano-ouchi} (ii).) Now, by Theorem \ref{thm: matsui23} (i), (iv) and Theorem \ref{theorem: Balmer is open in Matsui}, we have
        \[
        X \iso {\spec_{\tens_X^\bb L} \perf X \iso } (\spc^\ser \perf X, \ecal O_{\perf X, \vartriangle}|_{\spc^\ser \perf X}),
        \]
        {where the right-most ringed space is determined by the triangulated category structure of $\perf X$.}
        
        For part (ii), take a Gorenstein projective variety $Y$ with $\Phi:\perf X \simeq \perf Y$. Then, we have {an open embedding}
        \[
        \Phi\inv(\spc_{\tens_Y^\bb L}\perf Y) \subset \spc^\ser \perf X
        \]
        by \cite{ito2023gluing}*{\href{https://arxiv.org/pdf/2309.08147.pdf}{Corollary 6.3}} {and Theorem \ref{theorem: Balmer is open in Matsui}}. 
        Now, by Theorem \ref{thm: matsui23} the following composition $f$ of canonical morphisms
        \begin{align*}
            Y \iso \spec_{\tens_Y^\bb L} \perf Y & \iso (\spc_{\tens_Y^\bb L}\perf Y, \ecal O_{\perf Y,\vartriangle}|_{\spc_{\tens_Y^\bb L}\perf Y}) \\
            &\iso (\Phi\inv(\spc_{\tens_Y^\bb L}\perf Y), \ecal O_{\perf X,\vartriangle}|_{\Phi\inv(\spc_{\tens_Y^\bb L}\perf Y)}) \\
            &\inj (\spc^\ser \perf X, \ecal O_{\perf X, \vartriangle}|_{\spc^\ser \perf X}) \iso \spec_{\tens_X^\bb L} \perf X  \iso X
        \end{align*}
        is an open immersion of ringed spaces. Moreover, by Remark \ref{remark:k-linear} and by the fact that $k$-linear triangulated equivalence induces a $k$-isomorphism of the Matsui spectra, we see that $f$ is an open immersion of $k$-schemes. Now, since $f$ is moreover proper, we see that $f$ is a closed and open immersion, and therefore $f: Y\to X$ is an isomorphism as $X$ is irreducible and hence connected. 
\end{proof}
By using similar ideas, we can get the following results as well.

\begin{corollary}
    Let $X$ be a reduced quasi-affine scheme over $k$. Then, the following assertions hold:
    \begin{enumerate}
        \item The scheme $X$ can be reconstructed solely from the triangulated category structure of $\perf X$.
        \item If there exists a noetherian reduced scheme $Y$ over $k$ with $\perf X \simeq \perf Y$, then $X \iso Y$. 
    \end{enumerate} 
\end{corollary}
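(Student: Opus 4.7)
The overall plan is to mimic the proof of the Bondal--Orlov--Ballard theorem, combining Theorem \ref{theorem: Balmer is open in Matsui} with a categorical identification of the Balmer spectrum inside the Matsui spectrum, but replacing the Serre-invariant-locus characterization (which required a Serre functor) by one adapted to the quasi-affine setting.

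For part (i), a reduced quasi-affine scheme over $k$ is of finite type, hence quasi-projective, so Theorem \ref{theorem: Balmer is open in Matsui} applies and yields the isomorphism of ringed spaces
\[
(\spc_{\tens_X^\bb L}\perf X,\, \ecal O_{\perf X, \vartriangle}|_{\spc_{\tens_X^\bb L}\perf X}) \iso X.
\]
The remaining task is to identify $\spc_{\tens_X^\bb L}\perf X$ inside $\spc_\vartriangle \perf X$ using only the triangulated structure of $\perf X$. The plan is to invoke the Hopkins--Neeman--Thomason classification: for a noetherian quasi-affine $X$, the compact object $\ecal O_X$ generates $\perf X$ under thick closure, so every thick subcategory of $\perf X$ is automatically a $\tens_X^\bb L$-ideal. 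Consequently $\spc_{\tens_X^\bb L}\perf X = \spc_\vartriangle \perf X$ as topological spaces, and the entire Matsui spectrum --- an invariant of the triangulated structure alone --- is already isomorphic to $X$ as a ringed space.

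For part (ii), given $\Phi: \perf X \simeq \perf Y$ with $Y$ noetherian reduced, the equivalence induces an isomorphism $\Phi^*: \spec_\vartriangle \perf Y \iso \spec_\vartriangle \perf X$ of ringed spaces, and by part (i) the target is canonically identified with $X$. Composing with Balmer's iso $Y \iso \spec_{\tens_Y^\bb L}\perf Y$ and the canonical morphism from Theorem \ref{thm: matsui23}(iii) yields a morphism of ringed spaces
\[
f: Y \iso \spec_{\tens_Y^\bb L}\perf Y \longrightarrow \spec_\vartriangle \perf Y \iso X.
\]
The main obstacle is to upgrade $f$ to an isomorphism, which reduces to showing that the inclusion $\spc_{\tens_Y^\bb L}\perf Y \subset \spc_\vartriangle \perf Y$ is an equality. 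I would argue this by transporting the classification of part (i) along $\Phi$: every thick subcategory of $\perf Y$ matches a thick subcategory of $\perf X$, which is necessarily a $\tens_X^\bb L$-ideal. Combined with Thomason's classification of $\tens_Y^\bb L$-ideals via specialization-closed subsets and the reducedness of $Y$, this should force every prime thick subcategory of $\perf Y$ to be a prime $\tens_Y^\bb L$-ideal. The structure sheaves then match automatically in the spirit of Theorem \ref{thm: matsui23}(iv), and $f$ becomes an isomorphism of ringed spaces.
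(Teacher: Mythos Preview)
For part (i) your core argument is correct and matches the paper's, but you make an unnecessary and false claim along the way: a reduced quasi-affine $k$-scheme need not be of finite type (take $\spec k[[t]]$), so Theorem~\ref{theorem: Balmer is open in Matsui} is not available in general. You do not need it. Once you observe that $\ecal O_X$ generates $\perf X$, every thick subcategory is automatically a $\tens_X^\bb L$-ideal, so $\spc_{\tens_X^\bb L}\perf X = \spc_\vartriangle \perf X$ as topological spaces; the openness hypothesis of Theorem~\ref{thm: matsui23}(iv) is then trivially satisfied and already yields $\spec_\vartriangle\perf X\cong X$ as ringed spaces. This is exactly how the paper argues.

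For part (ii) there is a genuine gap. You correctly reduce to showing that every prime thick subcategory $\cal P\subset\perf Y$ is a $\tens_Y^\bb L$-ideal, but your proposed mechanism does not achieve this. Transporting along $\Phi$ only gives $\Phi^{-1}(\cal P)=\cal S_X(x)$ for some $x\in X$; nothing here forces $\cal P=\cal S_Y(y)$ for some $y\in Y$. Thomason's classification on the $Y$-side says nothing until you already know $\cal P$ is a $\tens_Y^\bb L$-ideal, so invoking it is circular, and the reducedness of $Y$ plays no visible role in your sketch. The paper takes a different route: rather than comparing the two tensor structures, it uses the map $Y\cong\spec_{\tens_Y^\bb L}\perf Y\to\spec_\vartriangle\perf Y\cong X$ to realize $Y$ as an open subscheme of $X$, concludes that $Y$ is therefore itself quasi-affine, and \emph{then} applies part (i) directly to $Y$ to obtain $Y\cong\spec_\vartriangle\perf Y\cong\spec_\vartriangle\perf X\cong X$. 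The idea missing from your plan is this bootstrap through quasi-affineness of $Y$, which lets part (i) do all the work and avoids any direct comparison of $\tens_X^\bb L$ with $\tens_Y^\bb L$.
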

\begin{proof} For part (i), since $X$ is quasi-affine (hence $\ecal O_X$ is ample), we get an isomorphism $$\spec_\vartriangle \perf X \cong \spec_{\tens_X^\bb L}\perf X \cong X$$ by Theorem \ref{thm: matsui23}.

For part (ii), take a noetherian reduced scheme $Y$ over $k$ with $\Phi:\perf X \simeq \perf Y$. Then, we have an open immersion
        \[
        Y \hookrightarrow \spec_{\tens_X^\bb L}\perf Y \cong \spec_{\tens_X^\bb L}\perf X \cong X
        \]
        by Theorem \ref{theorem: Balmer is open in Matsui} and part (i). 
        In particular, $Y$ is also quasi-affine.
        Thus, part (i) shows that $X \cong \spec_\vartriangle \perf X \cong \spec_\vartriangle \perf Y \cong Y$.
\end{proof}

\begin{remark}
Favero \cite{FAVERO20121955}*{Corollary 3.11} proved the same result under the assumptions that $X$ is a quasi-affine variety and $Y$ is a divisorial variety. 
\end{remark}

{
\begin{remark}

The arguments of the above two corollaries also prove the following more general statement:
\begin{quote}
Let $X$ and $Y$ be quasi-projective schemes over $k$.
Let $\ecal L$ and $\ecal M$ be line bundles on $X$ and $Y$, respectively.
Assume that the following two conditions:
\begin{enumerate}
\item[\rm(a)]
$\ecal L$ is (anti-)ample.
\item[\rm(b)] 
There is a triangulated equivalence $\Phi: \perf X \simeq \perf Y$ such that $\Phi(\ecal F \tens_{X}^\bb L \ecal L) \cong \Phi(\ecal F) \tens_{Y}^\bb L \ecal M$ for every $\ecal F \in \perf X$.
\end{enumerate}
Then there is an open immersion $X \hookrightarrow Y$ of $k$-schemes.
\end{quote}
Further generalizations are discussed in a work in preparation (\cite{ito2024polarization}).
\end{remark}}

\section{Categorical construction of scheme structure on Fourier-Mukai locus}
In \cite{ito2023gluing}, one of the authors studied the following locus in the Matsui spectrum.
\begin{definition}
    Let $\cal T$ be a triangulated category. 
    \begin{enumerate}
        \item We say a smooth projective variety $X$ is a \textbf{Fourier-Mukai partner} of $\cal T$ if there exists a triangulated equivalence $\cal T \simeq \perf X$. Let $\FM \cal T$ denote the set of isomorphism classes of Fourier-Mukai partners of $\cal T$. 
        \item We say a tt-structure $\tens$ on $\cal T$ is \textbf{geometric in $X \in \FM \cal T$} if there exists an equivalence 
        \[
        (\cal T, \tens) \simeq (\perf X,\tens_{X}^\bb L)
        \]
        of tt-categories. We say a tt-structure $\tens$ on $\cal T$ is \textbf{geometric} if there exists $X \in \FM \cal T$ such that $\tens$ is geometric in $X$. 
        \item Define the \textbf{Fourier-Mukai locus} of $\cal T$ to be the subspace
        \[
        \spc^\fm \cal T := \bigcup_{\text{geom. tt-str. $\tens$ on $\cal T$}} \spc_\tens \cal T \subset \spc_\vartriangle \cal T.
        \]
    \end{enumerate}
\end{definition}
Now, we have the following consequences of Theorem \ref{theorem: Balmer is open in Matsui}.
\begin{prop}
    Let $\cal T$ be a triangulated category with $\FM \cal T \neq \emp$. Then, the following hold: 
    \begin{enumerate}
        \item For a geometric tt-structure $\tens$ on $\cal T$, the inclusion $\spc_\tens \cal T\subset \spc_\vartriangle \cal T$ is open. 
        \item The Fourier-Mukai locus of $\cal T$ is open in the Matsui spectrum of $\cal T$. 
        \item In \cite{ito2023gluing}, the topology on the Fourier-Mukai locus is defined to be the one generated by open subsets of the Balmer spectrum for each geometric tt-structure. This topology on $\spc^\fm \cal T$ agrees with the subspace topology on $\spc^\fm \cal T$ in $\spc_\vartriangle \cal T$. 
    \end{enumerate}   
\end{prop}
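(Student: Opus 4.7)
For part (i), the plan is to reduce to Theorem \ref{theorem: Balmer is open in Matsui} via transport of structure. Given a geometric tt-structure $\tens$ on $\cal T$, by definition there exist $X \in \FM \cal T$ and a tt-equivalence $\Phi: (\cal T, \tens) \simeq (\perf X, \tens_X^\bb L)$. Since the Matsui spectrum is defined purely from the triangulated structure, $\Phi$ induces a homeomorphism $\spc_\vartriangle \cal T \iso \spc_\vartriangle \perf X$, and since $\Phi$ additionally respects the tensor structure, this homeomorphism identifies $\spc_\tens \cal T$ with $\spc_{\tens_X^\bb L} \perf X$. Applying Theorem \ref{theorem: Balmer is open in Matsui} to the quasi-projective (in fact smooth projective) variety $X$ gives openness of the latter, and hence of the former.

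For part (ii), the statement is immediate: by definition $\spc^\fm \cal T$ is the union of $\spc_\tens \cal T$ over all geometric tt-structures $\tens$ on $\cal T$, each of which is open in $\spc_\vartriangle \cal T$ by part (i), so the union is open.

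For part (iii), I need to compare two topologies on the same underlying set $\spc^\fm \cal T$: the one from \cite{ito2023gluing}, say $\tau_{\fm}$, generated by open subsets of each $\spc_\tens \cal T$ for $\tens$ geometric, and the subspace topology $\tau_{\mathrm{sub}}$ inherited from $\spc_\vartriangle \cal T$. For $\tau_{\fm} \subseteq \tau_{\mathrm{sub}}$, note that by part (i) each inclusion $\spc_\tens \cal T \inj \spc_\vartriangle \cal T$ is a topological embedding onto an open subset, so any open subset of any $\spc_\tens \cal T$ is open in $\spc_\vartriangle \cal T$, hence also open in the subspace topology on $\spc^\fm \cal T$. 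For the reverse inclusion $\tau_{\mathrm{sub}} \subseteq \tau_{\fm}$, take any $V = U \cap \spc^\fm \cal T$ with $U \subset \spc_\vartriangle \cal T$ open, and observe
\[
V = \bigcup_{\tens \text{ geom.}} (U \cap \spc_\tens \cal T),
\]
where each $U \cap \spc_\tens \cal T$ is open in $\spc_\tens \cal T$, exhibiting $V$ as a union of basic opens in $\tau_{\fm}$.

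The only non-formal input is part (i); once the reduction to the geometric model $(\perf X, \tens_X^\bb L)$ is in place, the rest is pure topology. The main (and really only) obstacle would be ensuring that the tt-equivalence $\Phi$ really does transport both spectra faithfully, but this is built into their definitions: the Matsui spectrum is a triangulated invariant and the Balmer spectrum a tt-invariant.
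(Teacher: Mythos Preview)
Your proof is correct and follows essentially the same approach as the paper: transport the question along a tt-equivalence $\Phi:(\cal T,\tens)\simeq(\perf X,\tens_X^\bb L)$ to reduce part (i) to Theorem \ref{theorem: Balmer is open in Matsui}, then derive (ii) and (iii) formally. The paper dispatches (ii) and (iii) with a single sentence (``immediately follow from part (i)''), whereas you spell out both inclusions for (iii); this extra detail is fine and changes nothing substantive.
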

\begin{proof}
    {For a fixed $X \in \FM \cal T$ and a tt-equivalence $\Phi: \cal T \xrightarrow{\sim} \perf X$, we have a commutative diagram
    $$
    \xymatrix{
    \spc_\tens \cal T \ar[r]^\subseteq \ar[d]_\Phi^\cong & \spc^\fm \cal T \ar[r]^\subseteq \ar[d]_\Phi^\cong & \spc_\vartriangle \cal T \ar[d]_\Phi^\cong\\
    \spc_\tens \cal \perf X \ar[r]^\subseteq & \spc^\fm \cal \perf X \ar[r]^\subseteq & \spc_\vartriangle \cal \perf X
    }
    $$
    where the vertical maps induced from $\Phi$ are homeomorphisms.}
    By Theorem \ref{theorem: Balmer is open in Matsui}, we have that the inclusion $\spc_{\tens_X^\bb L} \perf X \subset \spc_\vartriangle \perf X$ is an open embedding. {Therefore, we have part (i).} Part (ii) and (iii) immediately follow from part (i). 
\end{proof}
Note by \cite{ito2023gluing}*{\href{https://arxiv.org/pdf/2309.08147.pdf}{Theorem 4.7}}, we can glue the Balmer spectra corresponding to geometric tt-structures to equip $\spc^\fm \cal T$ with a scheme structure, where the corresponding scheme is denoted by $\spec^\fm \cal T$. Recall by construction the scheme structure on the Fourier-Mukai locus satisfies the following properties. 
\begin{theorem}[\cite{ito2023gluing}*{\href{https://arxiv.org/pdf/2309.08147.pdf}{Theorem 4.7}}]\label{thm: gluing fm locus}
    Let $\cal T$ be a triangulated category with {$\FM \cal T \neq \emp$}. Then, the scheme $\spec^\fm \cal T$ is a smooth scheme locally of finite type and for any geometric tt-structure $\tens$, and we have a canonical open immersion
    \[
    \spec_\tens \cal T \inj \spec^\fm \cal T
    \]
    of schemes whose underlying continuous map is the inclusion. 
\end{theorem}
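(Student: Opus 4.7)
The plan is to leverage Theorem \ref{theorem: Balmer is open in Matsui} to construct $\spec^\fm \cal T$ directly as an open ringed subspace of $\spec_\vartriangle \cal T$, thereby avoiding any explicit gluing of ringed spaces coming from distinct Fourier-Mukai partners. By definition, every geometric tt-structure $\tens$ on $\cal T$ arises from some smooth projective variety $X \in \FM \cal T$, and hence Theorem \ref{theorem: Balmer is open in Matsui} identifies the Balmer spectrum $\spec_\tens \cal T$ with an open ringed subspace of $\spec_\vartriangle \cal T$ whose restricted Matsui structure sheaf coincides with the Balmer structure sheaf. The reducedness caveat in Theorem \ref{theorem: Balmer is open in Matsui} is vacuous here, since smooth projective varieties are reduced.

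First I would set $\spec^\fm \cal T := (\spc^\fm \cal T, \ecal O_{\cal T, \vartriangle}|_{\spc^\fm \cal T})$. By the preceding proposition, $\spc^\fm \cal T$ is open in $\spc_\vartriangle \cal T$ and is covered by the opens $\spc_\tens \cal T$ corresponding to geometric tt-structures $\tens$ on $\cal T$. Each such piece is isomorphic, as a ringed space, to a smooth projective variety, so $\spec^\fm \cal T$ is locally isomorphic to a smooth projective variety and is therefore a scheme that is smooth and locally of finite type.

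For the open immersion claim, note that topologically $\spc_\tens \cal T \subseteq \spc^\fm \cal T$ is open by Theorem \ref{theorem: Balmer is open in Matsui}, and the structure sheaves of $\spec_\tens \cal T$ and of $\spec^\fm \cal T$ both come from restricting the single sheaf $\ecal O_{\cal T, \vartriangle}$, so they agree. This yields the desired canonical open immersion $\spec_\tens \cal T \inj \spec^\fm \cal T$ whose underlying continuous map is the inclusion.

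The conceptual obstacle that any direct gluing approach must face is checking that the Balmer structure sheaves coming from two different geometric tt-structures $\tens$ and $\tens'$ agree on the overlap $\spc_\tens \cal T \cap \spc_{\tens'} \cal T$, since these sheaves live a priori on two distinct Fourier-Mukai partners $X$ and $Y$. In the proposed approach this difficulty evaporates, because both sheaves are canonically the restriction of the Matsui structure sheaf on $\spc_\vartriangle \cal T$ to the respective open subsets, so agreement on overlaps is automatic. This is the main simplification furnished by Theorem \ref{theorem: Balmer is open in Matsui}.
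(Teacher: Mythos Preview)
Your argument is correct, but note that the paper does not prove this theorem at all: it is quoted from \cite{ito2023gluing}, where $\spec^\fm \cal T$ is built by an explicit Zariski gluing of the Balmer spectra $\spec_\tens \cal T$ along their overlaps, with the topology on $\spc^\fm \cal T$ \emph{declared} to be the one generated by the open sets of these Balmer spectra. The compatibility of structure sheaves on overlaps---the step you correctly identify as the conceptual obstacle---is the actual work carried out there. Your route instead \emph{redefines} $\spec^\fm \cal T$ as $(\spc^\fm \cal T, \ecal O_{\cal T,\vartriangle}|_{\spc^\fm \cal T})$ and invokes Theorem~\ref{theorem: Balmer is open in Matsui} to see that this ringed space is locally a smooth projective variety, hence a smooth scheme locally of finite type. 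This is exactly what the present paper does later, in Theorem~\ref{prop: structure sheaf of FM locus}, where it shows a posteriori that the two constructions agree; you have effectively collapsed Theorem~\ref{thm: gluing fm locus} and Theorem~\ref{prop: structure sheaf of FM locus} into a single step. The trade-off is that your argument depends essentially on Theorem~\ref{theorem: Balmer is open in Matsui}, the main new result of this paper, whereas the gluing in \cite{ito2023gluing} predates that theorem and makes no reference to the ambient Matsui spectrum or its structure sheaf. So what you have written is an alternative (and slicker) construction of a scheme with the stated properties, not a proof of the cited statement about the originally constructed $\spec^\fm \cal T$.
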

Now, we show that we can obtain the same scheme structure on the Fourier-Mukai locus by simply restricting the structure sheaf on the Matsui spectrum to the Fourier-Mukai locus. To see this, let us recall the following classical result (e.g. {\cite{EGAIV}*{Proposition 10.9.6} and} \cite{Har77}*{Proposition I.3.5}):
%locally of finite typeを含む主張はEGAに書いてあったのでそちらも引用しておきました
\begin{lemma}\label{lem: classical ag}
    Let $X$ and $Y$ be reduced schemes locally of finite type over an algebraically closed field $k$ and let $f,g:X\to Y$ be morphisms of schemes over $k$. If $f$ and $g$ agree on the set of closed points, then they agree as a morphism of schemes over $k$. 
\end{lemma}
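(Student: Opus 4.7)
The plan is to first show that $f$ and $g$ agree as continuous maps, then to verify the equality of the induced sheaf maps affine-locally using the Nullstellensatz together with the Jacobson property of finitely generated $k$-algebras and the reducedness of $X$.

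For the topological step, I would argue by contradiction. Suppose $f(x_0) \neq g(x_0)$ for some $x_0 \in X$. Since $Y$ is a $T_0$ space (as every scheme is), after possibly swapping the roles of $f$ and $g$ there is an affine open $V \subset Y$ containing $f(x_0)$ but not $g(x_0)$. The subset $f^{-1}(V) \setminus g^{-1}(V) \subset X$ is then a nonempty locally closed subset; because $X$ is locally of finite type over $k$ and therefore a Jacobson scheme, every nonempty locally closed subset of $X$ contains a closed point of $X$. Any such closed point $u$ would satisfy $f(u) \in V$ and $g(u) \notin V$, contradicting the hypothesis $f(u) = g(u)$.

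Once $f$ and $g$ are known to coincide set-theoretically, I would check the sheaf equality locally around each $x \in X$. Pick an affine open $V = \spec B \subset Y$ containing $f(x) = g(x)$ and then an affine open $U = \spec A \ni x$ inside $f^{-1}(V) \cap g^{-1}(V)$; here $A$ is a reduced finitely generated $k$-algebra. The restrictions correspond to $k$-algebra homomorphisms $f^\sharp, g^\sharp : B \to A$. For any closed point $u \in U$, which is automatically a closed point of $X$, the Nullstellensatz gives $A/\m_u = k$, and the compositions $B \to A \to A/\m_u = k$ induced by $f^\sharp$ and $g^\sharp$ both record the common point $f(u) = g(u) \in V$; hence they agree. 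Therefore $f^\sharp(b) - g^\sharp(b) \in \m_u$ for every $b \in B$ and every maximal ideal $\m_u \subset A$. Since $A$ is Jacobson, the intersection of all its maximal ideals equals $\nilrad(A)$, which vanishes by reducedness, so $f^\sharp = g^\sharp$ and $f|_U = g|_U$; varying $x$, the affine opens $U$ cover $X$ and yield $f = g$.

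I do not foresee a serious obstacle: the argument is entirely classical. The only mild subtlety is that $Y$ is not assumed separated, so the diagonal $\Delta_Y$ is merely an immersion and the usual equalizer-as-closed-subscheme shortcut is not available; this is precisely why I settle the topological step by the direct Jacobson argument above before passing to sheaves.
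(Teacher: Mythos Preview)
The paper does not prove this lemma; it simply records it as a classical fact, citing EGA~IV, Proposition~10.9.6 and Hartshorne, Proposition~I.3.5. Your argument is correct and is essentially the standard proof one extracts from those references: the Jacobson property of schemes locally of finite type over a field upgrades agreement on closed points to set-theoretic agreement everywhere, and then reducedness together with the Nullstellensatz forces the sheaf maps to coincide.

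One small remark on your closing comment: even without separatedness of $Y$, the diagonal $\Delta_{Y/k}$ is still an immersion, so the equalizer of $f$ and $g$ is represented by a locally closed subscheme $E \hookrightarrow X$. Your Jacobson argument shows that $E$ contains every closed point of $X$, hence (being locally closed in a Jacobson space) has $|E| = |X|$; since $X$ is reduced, the ideal sheaf of $E$ must then vanish, giving $E = X$ and $f = g$. So the equalizer shortcut is available in only slightly modified form. Your direct hands-on argument is of course equally valid and arguably more transparent.
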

Now, we are ready to show the following. 
\begin{theorem}\label{prop: structure sheaf of FM locus}
    Let $\cal T$ be a triangulated category with $\FM \cal T \neq \emp$. Then, there is an isomorphism
    \[
    \spec^\fm \cal T \overset{\sim}{\to} (\spc^\fm \cal T,\ecal O_{\cal T,\vartriangle}|_{\spc^\fm \cal T})
    \]
     of ringed spaces whose underlying continuous map is the identity. 
\end{theorem}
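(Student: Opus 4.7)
The plan is to build the claimed isomorphism by assembling local isomorphisms on geometric Balmer spectra and gluing them along overlaps, reducing the compatibility check to the level of closed points via Lemma \ref{lem: classical ag}.

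First, for each geometric tt-structure $\tens$ on $\cal T$ corresponding to a Fourier-Mukai partner $X \in \FM \cal T$, the tt-equivalence $(\cal T, \tens) \simeq (\perf X, \tens_X^{\bb L})$ together with Theorem \ref{theorem: Balmer is open in Matsui} makes the inclusion $\spc_\tens \cal T \subset \spc_\vartriangle \cal T$ open. Since $\spec_\tens \cal T \iso X$ is smooth and hence reduced, Theorem \ref{thm: matsui23}(iv) upgrades this to a canonical isomorphism of ringed spaces
\[
\phi_\tens: \spec_\tens \cal T \overset{\sim}{\to} (\spc_\tens \cal T, \ecal O_{\cal T, \vartriangle}|_{\spc_\tens \cal T}).
\]
Since the $\spc_\tens \cal T$ cover $\spc^\fm \cal T$ as $\tens$ varies over geometric tt-structures, the ringed space $(\spc^\fm \cal T, \ecal O_{\cal T, \vartriangle}|_{\spc^\fm \cal T})$ is covered by open ringed subspaces that are schemes, and is therefore itself a scheme, smooth and locally of finite type over $k$.

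Next, I glue the $\phi_\tens$ into a global isomorphism with $\spec^\fm \cal T$. By Theorem \ref{thm: gluing fm locus}, the scheme $\spec^\fm \cal T$ is obtained by gluing the open subschemes $\spec_\tens \cal T$ along the identity on their overlaps. For any two geometric tt-structures $\tens_1, \tens_2$, the two compositions
\[
\spec_{\tens_i} \cal T \xrightarrow{\phi_{\tens_i}} (\spc_{\tens_i} \cal T, \ecal O_{\cal T, \vartriangle}|_{\spc_{\tens_i} \cal T}) \hookrightarrow (\spc^\fm \cal T, \ecal O_{\cal T, \vartriangle}|_{\spc^\fm \cal T})
\]
restricted to the overlap $U := \spc_{\tens_1} \cal T \cap \spc_{\tens_2} \cal T$ give two morphisms of reduced $k$-schemes locally of finite type that induce the identity on underlying topological spaces. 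Lemma \ref{lem: classical ag} forces them to coincide, so the $\phi_\tens$ glue to a global morphism which is locally an isomorphism, hence a global isomorphism.

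The main obstacle is precisely this gluing compatibility: a priori, the transition data built into the construction of $\spec^\fm \cal T$ in \cite{ito2023gluing} (coming from the Balmer side) need not match the transition data induced by restriction of $\ecal O_{\cal T, \vartriangle}$ (coming from the intrinsic Matsui side). Lemma \ref{lem: classical ag} bypasses this difficulty by reducing the compatibility question to the behavior on closed points, where both sets of transitions tautologically act as the identity.
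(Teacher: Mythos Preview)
Your proof is correct and follows essentially the same approach as the paper: construct local isomorphisms on each geometric Balmer spectrum via Theorem \ref{theorem: Balmer is open in Matsui} and Theorem \ref{thm: matsui23}(iv), then glue them using Lemma \ref{lem: classical ag} to handle the overlap compatibility, and conclude by noting the resulting map is a homeomorphism that is locally an isomorphism. The paper's write-up is nearly identical in structure and in the ingredients invoked.
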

\begin{proof}
First, note that by {Theorem \ref{theorem: Balmer is open in Matsui}}, $(\spc^\fm \cal T,\ecal O_{\cal T,\vartriangle}|_{\spc^\fm \cal T})$ is a smooth scheme locally of finite type. Take an open covering $\{\spc_{\tens_\alpha} \cal T\}_{\alpha \in A}$ of $\spc^\fm \cal T$, where each $\tens_\alpha$ is a geometric tt-structure on $\cal T$. Then, for each $\alpha\in A$, there exists a canonical open immersion
\[
\iota_\alpha: \spec_{\tens_\alpha} \cal T \inj \spec^\fm \cal T 
\]
whose underlying continuous map is the inclusion by Theorem \ref{thm: gluing fm locus} and therefore there is an open immersion
\[
i_\alpha: \iota_{\alpha}(\spec_{\tens_\alpha}\cal T) \inj (\spc^\fm \cal T,\ecal O_{\cal T,\vartriangle}|_{\spc^\fm \cal T})
\]
whose underlying continuous map is the inclusion by {Theorem \ref{theorem: Balmer is open in Matsui}}. Now, by Lemma \ref{lem: classical ag}, we see that $\{i_\alpha\}_{\alpha \in A}$ glues to a morphism
\[
\phi:\spec^\fm \cal T \to (\spc^\fm \cal T,\ecal O_{\cal T,\vartriangle}|_{\spc^\fm \cal T})
\]
whose underlying continuous map is the identity. Since $\phi$ is a homeomorphism and locally an isomorphism, it is an isomorphism. 
\end{proof}
\begin{remark}\label{rem: topological reconstruction is enough}
    Note that Theorem \ref{prop: structure sheaf of FM locus} makes the construction of the structure sheaf on the Fourier-Mukai locus purely triangulated categorical. Therefore, if we can determine the underlying topological space of the Fourier-Mukai locus categorically, then we can reconstruct information coming from the gluings of structure sheaves of Fourier-Mukai partners performed in \cite{ito2023gluing}. In particular, we have more hope to have backward applications of the Fourier-Mukai locus to birational geometry.
\end{remark}

Keeping this remark in mind, let us recall some geometric results on the Fourier-Mukai locus from \cite{ito2023gluing}. First, we recall basic notations and terminologies. 
\begin{definition}
    Let $\cal T$ be a triangulated category with $X \in \FM \cal T$. 
    \begin{enumerate}
        \item Let $\spec_{\tens,X} \cal T \subset \spec^\fm\cal T$ denote the open subscheme whose underlying topological space $\spc_{\tens,X} \cal T$ is the union of the Balmer spectra corresponding to geometric tt-structures in $X$. In other words, set
        \[
        \spec_{\tens,X} \cal T := \bigcup_{\text{geom. tt-str. $\tens$ in $X$}} \spec_{\tens} \cal T \subset \spec^\fm \cal T.  
        \]
        Note that for a fixed geometric tt-structure $\tens_0$ in $X$, we can write
        \[
        \spc_{\tens,X} \cal T = \bigcup_{\tau \in \auteq \cal T} \tau(\spc_{\tens_0}\cal T),
        \]
        where $\auteq \cal T$ denote the group of natural isomorphism classes of triangulated autoequivalences of $\cal T$ and {$\tau(\spc_{\tens_0}\cal T)$ is the image of $\spc_{\tens_0}\cal T$ under the following action of $\auteq \cal T$ on $\spec_\vartriangle \cal T$:} 
        \[
        \auteq \cal T \times \spec_\vartriangle \cal T \ni (\tau, \cal P) \mapsto \tau(\cal P) \in \spec_\vartriangle \cal T. 
        \]
        \item We say $X$ is \textbf{tt-separated} (resp. \textbf{tt-irreducible}) if $\spec_{\tens,X} \cal T$ is separated (resp. irreducible). 
    \end{enumerate}
\end{definition}
It is natural to ask how those copies of Fourier-Mukai partners interact with each other in the Matsui spectrum. Indeed, the following results and examples in \cite{ito2023gluing} show that the topology of the Fourier-Mukai locus is closely related to types of possible equivalences between Fourier-Mukai partners, which are then related to (birational) geometric properties of varieties. 
\begin{definition}
    Let $X$ and $Y$ be smooth projective varieties. We say a triangulated equivalence
    \[
    \Phi:\perf X \to \perf Y
    \]
    is \textbf{birational} if there exists a closed point $x \in X$ such that $\Phi(k(x))$ is isomorphic to $k(y)$ for some closed point $y \in Y$, which implies that
    \[
    \Phi(\spc_{\tens_X^\bb L} \perf X) \cap \spc_{\tens_{\ecal O_Y}^\bb L} \perf Y \neq \emp
    \]
    and in particular that $X$ and $Y$ are birationally equivalent (and indeed $K$-equivalent) (cf. \cite{ito2023gluing}*{\href{https://arxiv.org/pdf/2309.08147.pdf}{Lemma 4.11}}). 
\end{definition}
We can characterize the topology of the Fourier-Mukai locus by using birational autoequivalences as follows. 
\begin{lemma}[\cite{ito2023gluing}*{\href{https://arxiv.org/pdf/2309.08147.pdf}{Corollary 4.21}}]\label{lem: tt-sep}
Let $X$ be a smooth projective variety. The following are equivalent:
\begin{enumerate}
    \item $X$ is tt-separated;
    \item $\spec_{\tens,X} \perf X$ is a disjoint union of copies of $X$ as schemes;
    \item If $\Phi:\perf X \to \perf X$ is a birational triangulated equivalence, then for any closed point $x \in X$, there exists a closed point $x' \in X$ such that $\Phi(k(x)) \iso k(x')$. 
\end{enumerate}
In light of Remark \ref{rem: topological reconstruction is enough}, a tt-separated smooth projective variety $X$ can be reconstructed as a connected component of $\spec_{\tens,X}\perf X$ if we can categorically determine $\spc_{\tens,X}\perf X$. 
\end{lemma}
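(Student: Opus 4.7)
The plan is to prove the cyclic implications (ii)$\Rightarrow$(i)$\Rightarrow$(iii)$\Rightarrow$(ii), the main tool being a separatedness-style identification of open copies of $X$ inside $\spc_{\tens,X}\perf X$ combined with the categorical characterization of closed points as specific prime tt-ideals. The implication (ii)$\Rightarrow$(i) is immediate since $X$ is projective, hence separated, and a disjoint union of separated schemes is separated.

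For (i)$\Rightarrow$(iii), I would fix a birational autoequivalence $\Phi:\perf X \to \perf X$ and closed points $x_0,y_0 \in X$ with $\Phi(k(x_0)) \iso k(y_0)$. On a smooth variety one has the characterization $\cal S_X(x) = \{\ecal F \in \perf X \mid \hom_X^\bullet(\ecal F,k(x))=0\}$, so this translates to $\Phi(\cal S_X(x_0)) = \cal S_X(y_0)$. Thus the two open subschemes $\spc_{\tens_X^\bb L}\perf X$ and $\Phi(\spc_{\tens_X^\bb L}\perf X)$ of $\spc_{\tens,X}\perf X$, each isomorphic to $X$, share this point. Separatedness then forces them to coincide: the two inclusions $X \hookrightarrow \spc_{\tens,X}\perf X$ agree on the nonempty (hence dense, by irreducibility of $X$) intersection, so by standard descent for morphisms into a separated scheme they agree globally. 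Therefore $\Phi$ permutes the set $\{\cal S_X(x)\}$ of closed-point tt-ideals. The same argument handles (iii)$\Rightarrow$(ii): cover $\spc_{\tens,X}\perf X = \bigcup_{\tau \in \auteq \perf X} \tau(\spc_{\tens_X^\bb L}\perf X)$, observe that two copies $\tau_1(\spc_{\tens_X^\bb L}\perf X)$ and $\tau_2(\spc_{\tens_X^\bb L}\perf X)$ intersect iff $\tau_1^{-1}\tau_2$ is birational, and apply (iii) together with the categorical characterization to conclude that intersecting copies must in fact coincide. The cover therefore partitions into equivalence classes of pairwise-equal copies, yielding (ii).

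The principal obstacle is the final step of (i)$\Rightarrow$(iii): upgrading ``$\Phi$ set-theoretically permutes $\{\cal S_X(x)\}$ and $\Phi(k(x_0)) \iso k(y_0)$'' to the stronger conclusion ``$\Phi(k(x)) \iso k(x')$ for \emph{every} closed $x$, with no arbitrary shift.'' The thick-subcategory-level argument only controls supports, not the cohomological degree of $\Phi(k(x))$. To close this gap, I would invoke Orlov's representability theorem to realize $\Phi$ as a Fourier--Mukai transform with kernel $\cal K \in \perf(X\times X)$; the normalization $\Phi(k(x_0)) \iso k(y_0)$ pins $\cal K$ locally to (a shift of) the structure sheaf of the graph of a local isomorphism, and spreading this out via the identification of the two copies inside $\spc_{\tens,X}\perf X$ propagates the normalization uniformly to all closed points.
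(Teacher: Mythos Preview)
Your cyclic route (ii)$\Rightarrow$(i)$\Rightarrow$(iii)$\Rightarrow$(ii) matches the paper's, and you correctly locate the only nontrivial content in (i)$\Rightarrow$(iii). Two points need repair, though.

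First, the separatedness step is mis-argued. You invoke the principle that two morphisms into a separated target agreeing on a dense open must coincide, but the two open immersions $i_1,i_2:X \hookrightarrow \spec_{\tens,X}\perf X$ do not \emph{agree} anywhere as morphisms from a common source; they merely have overlapping images. The correct mechanism is that separatedness of the target forces the map from the overlap $i_1(X)\cap i_2(X)$ into $i_1(X)\times i_2(X)\cong X\times X$ to be a closed immersion, so the overlap is proper; being also open and nonempty in the connected variety $X$, it is all of $X$. This is what the paper extracts from \cite{ito2023gluing}*{Corollary 4.21}.

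Second, and more substantively, the step you flag as the ``principal obstacle'' is not closed by your sketch. Knowing $\Phi(\cal S_X(x))=\cal S_X(x')$ only gives $\supp\Phi(k(x))=\{x'\}$; your proposal to ``spread out'' the single normalization $\Phi(k(x_0))\cong k(y_0)$ along the kernel is the right instinct but, as written, is not a proof --- at minimum one needs a semicontinuity argument to pin down the shift uniformly. The paper bypasses this entirely by citing \cite{ito2023gluing}*{Lemma 4.11(ii)}, which \emph{identifies} the overlap $\Phi^{-1}(\spc_{\tens_X^\bb L}\perf X)\cap\spc_{\tens_X^\bb L}\perf X$ with the maximal domain of definition of $\Phi$ (in the sense of \cite{ito2023gluing}*{Construction 4.6}): by definition this is precisely the locus on which the kernel restricts to skyscrapers, i.e., where $\Phi$ carries $k(x)$ to some $k(x')$. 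Once the overlap is all of $X$, condition (iii) is immediate, and (iii)$\Rightarrow$(ii) is handled by the same lemma. So your outline is correct, but the work you attempt to do by hand is already packaged in Lemma 4.11, and that citation is what your argument is missing.
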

\begin{proof}
    Since the claims here are phrased in a little different ways from \cite{ito2023gluing}, let us comment on how to show this version of the claims although the arguments are essentially same as the proof of \cite{ito2023gluing}*{\href{https://arxiv.org/pdf/2309.08147.pdf}{Lemma 4.20}}. First, note that part (ii) clearly implies part (i). To see part (i) implies part (iii), recall that \cite{ito2023gluing}*{\href{https://arxiv.org/pdf/2309.08147.pdf}{Lemma 4.11 (ii)}} claims for a birational autoequivalence $\Phi:\perf X \simeq \perf X$, $$\Phi\inv(\spc_{\tens_X^\bb L} \perf X) \cap \spc_{\tens_X^\bb L}\perf X$$ agrees with the maximal domain of definition of $\Phi$ (cf. \cite{ito2023gluing}*{\href{https://arxiv.org/pdf/2309.08147.pdf}{Construction 4.6}}). In particular, if $X$ is tt-separated and $\Phi:\perf X \to \perf X$ is a {birational} autoequivalence, then by \cite{ito2023gluing}*{\href{https://arxiv.org/pdf/2309.08147.pdf}{Corollary 4.21}}, the maximal domain of definition of $\Phi$ is the whole $X$ and hence any skyscraper sheaf gets sent to a skyscraper sheaf. Finally, part (iii) clearly implies part (ii) by \cite{ito2023gluing}*{\href{https://arxiv.org/pdf/2309.08147.pdf}{Lemma 4.11}}.
\end{proof}
We can say more about condition (ii) in Lemma \ref{lem: tt-sep}.
\begin{construction}\label{construction: decomposition of tt-seprated variety}
Let $X$ be a tt-separated smooth projective variety. 
Then the condition (iii) in Lemma \ref{lem: tt-sep} and \cite{HuyBook}*{Corollary 5.23} show that {there is the equality of subgroups}
\[
\{\Phi \in \auteq \cal \perf X \mid \Phi \mbox{ is birational up to shift}\} = \pic(X) \ltimes \aut(X) \times \bb Z[1] \subset \auteq \perf X.
\]
%部分群になるというのは任意のsmooth projective varietyについて成立します。例えばhttps://arxiv.org/pdf/1704.00292のRemark 4.3 (ii)に書いてあります。X tt-sepratedと{birational up to shift} = {standard autoequivalence}が同値という観察は前のpreprintを書いていた段階では気づいていませんでした。
Now, consider the set of left cosets
$$
{I_X} := \auteq \perf X/\{\Phi \in \auteq \cal \perf X \mid \Phi \mbox{ is birational up to shift}\}.
$$
By \cite{ito2023gluing}*{\href{https://arxiv.org/pdf/2309.08147.pdf}{Corollary 4.21}, {\href{https://arxiv.org/pdf/2309.08147.pdf}{{Theorem 4.27}}}}, we obtain an isomorphism
$$
\spec_{\tens,X} \perf X \cong \bigsqcup_{{I_X}} X
$$
of schemes.
\end{construction}

\begin{lemma}[\cite{ito2023gluing}*{\href{https://arxiv.org/pdf/2309.08147.pdf}{Lemma 4.30}}]\label{lem: tt-irr}
Let $X$ be a smooth projective variety. The following are equivalent:
\begin{enumerate}
    \item $X$ is tt-irreducible;
    \item For any triangulated equivalence $\Phi:\perf X \simeq \perf X$, we have 
    \[
    \Phi(\spc_{\tens_X^\bb L}\perf X) \cap \spc_{\tens_X^\bb L}\perf X \neq \emp.
    \]
    In particular, any copy of $X$ in $\spec_{\tens, X}\perf X$ intersects with each other. 
    \item Any triangulated equivalence $\Phi:\perf X \simeq \perf X$ is birational up to shift. 
\end{enumerate}
\end{lemma}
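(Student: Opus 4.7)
The plan is to prove the equivalences in two groups: (i) $\Leftrightarrow$ (ii) is a topological statement that follows from the smoothness of $\spec_{\tens,X}\perf X$, while (ii) $\Leftrightarrow$ (iii) is a categorical translation built on \cite{ito2023gluing}*{\href{https://arxiv.org/pdf/2309.08147.pdf}{Lemma 4.11}}.

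For (i) $\Rightarrow$ (ii), I would note that by Theorem \ref{theorem: Balmer is open in Matsui} each $\Phi(\spc_{\tens_X^\bb L}\perf X)$ is a non-empty open subset of $\spc_{\tens,X}\perf X$, being the image of an open set under the self-homeomorphism of the Matsui spectrum induced by $\Phi$; in an irreducible space, any two non-empty open subsets meet. For (ii) $\Rightarrow$ (i), Theorem \ref{thm: gluing fm locus} says $\spec_{\tens,X}\perf X$ is smooth, so its connected components coincide with its irreducible components. Replacing $\Phi$ by $\Psi_2\inv \Psi_1$, condition (ii) yields that any two copies $\Psi_1(\spc_{\tens_X^\bb L}\perf X)$ and $\Psi_2(\spc_{\tens_X^\bb L}\perf X)$ in the open cover $\spc_{\tens,X}\perf X = \bigcup_{\tau \in \auteq \perf X} \tau(\spc_{\tens_X^\bb L}\perf X)$ intersect, forcing the locus into a single connected component, hence into a single irreducible component.

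For the equivalence (ii) $\Leftrightarrow$ (iii), I would invoke the dictionary from \cite{ito2023gluing}*{\href{https://arxiv.org/pdf/2309.08147.pdf}{Lemma 4.11}} between closed points sent to shifts of skyscrapers and prime $\tens$-ideals preserved by $\Phi$. For (iii) $\Rightarrow$ (ii): birationality up to shift provides a closed point $x$ and an integer $m$ with $\Phi(k(x))[m] \cong k(y)$; since shifts act trivially on thick subcategories, the prime $\tens$-ideal $\cal S_X(x)$ is sent by $\Phi$ to $\cal S_X(y)$, placing it in $\Phi(\spc_{\tens_X^\bb L}\perf X) \cap \spc_{\tens_X^\bb L}\perf X$. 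For (ii) $\Rightarrow$ (iii): by symmetry, $\Phi\inv(\spc_{\tens_X^\bb L}\perf X) \cap \spc_{\tens_X^\bb L}\perf X$ is non-empty, and this intersection is identified in \cite{ito2023gluing}*{\href{https://arxiv.org/pdf/2309.08147.pdf}{Lemma 4.11}} with the maximal domain of definition of $\Phi$, a non-empty open subscheme of $X$; since closed points are dense in the variety $X$, it contains a closed point $x$ at which $\Phi(k(x))$ is a shift of a skyscraper, witnessing birationality of $\Phi$ up to shift.

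The main obstacle is the direction (ii) $\Rightarrow$ (iii): the non-empty intersection in (ii) only guarantees an arbitrary (possibly non-closed) prime $\tens$-ideal preserved by $\Phi$, and to produce a closed point where $\Phi$ behaves geometrically one must combine the open-subscheme description of the intersection from \cite{ito2023gluing}*{\href{https://arxiv.org/pdf/2309.08147.pdf}{Lemma 4.11}} with the density of closed points in a variety over an algebraically closed field.
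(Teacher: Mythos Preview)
Your proposal is correct and follows essentially the same route as the paper's proof: both derive (ii) $\Leftrightarrow$ (iii) from \cite{ito2023gluing}*{Lemma 4.11} and reduce (i) $\Leftrightarrow$ (ii) to the fact that for the smooth scheme $\spec_{\tens,X}\perf X$ irreducibility is the same as connectedness, which in turn is equivalent to pairwise intersection of the copies of $X$. The paper simply cites \cite{ito2023gluing}*{Lemma 4.11(i)}, \cite{ito2023gluing}*{Theorem 4.27}, and \cite{ito2023gluing}*{Lemma 4.30} for these steps, whereas you unpack them directly using Theorem \ref{theorem: Balmer is open in Matsui} and Theorem \ref{thm: gluing fm locus}; the substance is the same.
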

\begin{proof}
    Since the wordings are a little different from \cite{ito2023gluing}, let us comment on a proof. First of all, condition (ii) and condition (iii) are equivalent by \cite{ito2023gluing}*{\href{https://arxiv.org/pdf/2309.08147.pdf}{Lemma 4.11 (i)}}. Now, condition (i) and condition (ii) are also equivalent since by \cite{ito2023gluing}*{\href{https://arxiv.org/pdf/2309.08147.pdf}{Theorem 4.27}}, $\spec_{\tens,X} \perf X$ is connected if and only if any Balmer spectra corresponding to tt-structures that are geometric in $X$ intersect with each other, where the former is equivalent to condition (i) by \cite{ito2023gluing}*{\href{https://arxiv.org/pdf/2309.08147.pdf}{Lemma 4.30}} and the latter is equivalent to condition (ii), noting that such Balmer spectra can be mapped to each other by the action of $\auteq \perf X$. 
\end{proof}

Finally, let us list some examples of computations of the Fourier-Mukai locus to advertise what kind of geometry of varieties is reflected in the geometry of the Fourier-Mukai locus. 
\begin{example}[\cite{ito2023gluing}*{\href{https://arxiv.org/pdf/2309.08147.pdf}{Example 1.1, Example 1.4}}]\label{example: FM locus}
    Let $\cal T$ be a triangulated category with a smooth projective variety $X \in \FM \cal T$. 
    \begin{enumerate}
        \item If $X$ is a smooth projective variety with (anti-)ample canonical bundle, then $\spec^\fm\cal T \iso X$. In particular, $X$ is tt-irreducible and tt-separated.
        \item If $X$ is an elliptic curve, then $\spec^\fm \cal T$ is a disjoint union of infinitely many copies of $X$. In particular, $X$ is tt-separated, but not tt-irreducible. 
        \item If $X$ is a simple abelian variety, then all of its copies in $\spec^\fm \cal T$ are disjoint. In particular, $X$ is tt-separated, but not tt-irreducible in general. 
        \item If $X$ is a toric variety, then any copies of $X,Y \in \FM \cal T$ in $\spec^\fm \cal T$ intersect with each other along open sets containing tori. In particular, $X$ is tt-irreducible and not tt-separated in general. 
        \item If $X$ is a surface containing a $(-2)$-curve, then the corresponding spherical twist is birational and $X$ is not tt-separated. Moreover, $X$ is in general not tt-irreducible either. In particular, this shows any del Pezzo surface cannot contain a $(-2)$-curve. 
        \item If $X$ is connected with $X' \in \FM \cal T$ via a standard flop, then at least one pair of their copies in $\spec_\vartriangle \cal T$ intersect with each other along the complement of the flopped subvarieties.
        \item If $X$ is a Calabi-Yau threefold, then each irreducible component of $\spec^\fm \cal T$ containing a copy of $X$ contains all the copies of smooth projective Calabi-Yau threefolds that are birationally equivalent to $X$. Moreover, $X$ is neither tt-separated nor tt-irreducible in general. 
    \end{enumerate}
\end{example}
In the next section, we will generalize parts (ii) and (iii) to all abelian varieties. 
\section{Fourier-Mukai locus of abelian varieties}
In this section, we determine the Fourier-Mukai locus associated to an abelian variety. First, let us recall some basics of the derived category of coherent sheaves on an abelian variety. {For the rest of this paper, $k$ is an algebraically closed field of characteristic $0$.} %\com{I added this since $k$ is assumed to be algebraically closed of characteristic $0$ in Section 4 of \cite{Orlov_2002}. Please remove this if this assumption is not needed} 
\begin{notation}
    Let $X$ be an abelian variety and let $\hat X$ denote its dual. For a closed point $x \in X$, let $t_x:X\overset{\sim}{\to} X; y \mapsto y + x$ denote the translation. Moreover, for a closed point $\alpha \in \hat X$, let $\ecal L_\alpha \in \pic^0(X)$ denote the corresponding line bundle of degree $0$.   
\end{notation}
In \cite{Orlov_2002}, Orlov gave several important results on the derived category of coherent sheaves on an abelian variety. 
\begin{definition}\label{def:symplectic}
    Let $X$ be an abelian variety. Then, define the group of \textbf{symplectic automorphisms} of $X \times \hat X$ (with respect to the natural symplectic form) to be 
    \[
    \sp (X\times \hat X):= \l\{\begin{pmatrix}
        f_1 & f_2 \\ f_3 & f_4
    \end{pmatrix} \in \aut(X \times \hat X) \middle| \begin{pmatrix}
        f_1 & f_2 \\ f_3 & f_4
    \end{pmatrix}\begin{pmatrix}
        \hat f_4 & - \hat f_2 \\ -\hat f_3 & \hat f_1
    \end{pmatrix} = \id_{X\times \hat X} \r\},
    \]
    where $\aut(X \times \hat X)$ denotes the group of automorphisms of abelian varieties and $\hat f_i$ denotes the transpose of $f_i$. Here, we are writing an automorphism $f:X\times \hat X \to X\times \hat X$ with matrix form, where $f_1:X \to X$, $f_2:\hat X \to X$, etc. We say a symplectic automorphism $f$ is \textbf{elementary} if $f_2$ is an isogeny.  
\end{definition}
\begin{theorem}[\cite{Orlov_2002}*{\href{https://arxiv.org/pdf/alg-geom/9712017.pdf}{Theorem 2.10}, \href{https://arxiv.org/pdf/alg-geom/9712017.pdf}{Corollary 2.13}, \href{https://arxiv.org/pdf/alg-geom/9712017.pdf}{Proposition 3.2}, \href{https://arxiv.org/pdf/alg-geom/9712017.pdf}{Construction 4.10}, \href{https://arxiv.org/pdf/alg-geom/9712017.pdf}{Proposition 4.12}}]\label{thm: Orlov abelian}
    Let $X$ be an abelian variety. Then, there is a %surjectivityはgapがある可能性があり今回の議論では使わないのであえて省いてありました。一応Huybrechtsにも伺いましたが彼の本でもちゃんと追ってから引用したか怪しいとのことです。
    group homomorphism
    \[
    \gamma:\auteq \perf X \to \sp(X\times \hat X)
    \]
    such that for any $\Phi_\ecal E \in \auteq \perf X$ with Fourier-Mukai kernel $\ecal E \in \perf (X\times X)$ (which is necessarily isomorphic to a sheaf on $X\times X$ up to shift) and for any $(a,\alpha),(b,\beta) \in X\times \hat X$, we have that $\gamma(\Phi_\ecal E)(a,\alpha) = (b,\beta)$ if and only if 
    \[
    {t_{(0,b)}}_*\ecal E \tens_{\ecal O_{X\times X}} \pi_2^*{\ecal L}_\beta \iso t^*_{(a,0)} \ecal E\tens_{\ecal O_{X\times X}} \pi_1^* \ecal L_\alpha
    \]
    where $\pi_i$ denotes projections $\pi_i:X\times X \to X$ so that $\Phi_\ecal E(-) = {\bb R\pi_2}_*(\pi_1^*(-)\tens_{\ecal O_{X\times X}}^\bb L \ecal E)$. Moreover, we have
    \[
    \ker \gamma = (X\times \hat X)_k\times \bb Z[1] \subset \auteq \perf X,
    \]
    where each component corresponds to translations, tensor products with lines bundles of degree $0$, and shifts, respectively. Furthermore, for any elementary symplectic automorphism $f \in \sp(X\times \hat X)$, there exists a (semihomogeneous) vector bundle $\ecal E$ on $X \times X$ such that $\gamma(\Phi_\ecal E) = f$. 
\end{theorem}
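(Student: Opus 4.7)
My plan is to construct $\gamma$ via the natural $X \times \hat X$-action on $\perf X$ and to exploit the fact that autoequivalences preserve this action up to isomorphism. For each $(a,\alpha) \in X \times \hat X$, define the autoequivalence $T_{(a,\alpha)}:\perf X \to \perf X$ by $T_{(a,\alpha)}(\ecal F) = t_a^*\ecal F \tens_{X}^\bb L \ecal L_\alpha$, whose Fourier-Mukai kernel on $X \times X$ is a pushforward of $\ecal L_\alpha$ along a translate of the diagonal embedding. These functors assemble, up to natural isomorphism, into a subgroup $X \times \hat X \hookrightarrow \auteq \perf X$, and the (Heisenberg) commutators $T_{(a,\alpha)} T_{(b,\beta)} T_{(a,\alpha)}^{-1} T_{(b,\beta)}^{-1}$ encode the canonical symplectic form on $X \times \hat X$ (they are scalar autoequivalences whose scalars realize the symplectic pairing).

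Given any $\Phi = \Phi_\ecal E \in \auteq\perf X$, I would show that the conjugate $\Phi \circ T_{(a,\alpha)} \circ \Phi^{-1}$ is again isomorphic to $T_{(b,\beta)}$ for a unique $(b,\beta) \in X \times \hat X$. Rewriting the equation $\Phi \circ T_{(a,\alpha)} \iso T_{(b,\beta)} \circ \Phi$ via the standard composition formula for Fourier-Mukai kernels unwinds directly to the displayed isomorphism $t_{(0,b)*}\ecal E \tens \pi_2^*\ecal L_\beta \iso t_{(a,0)}^*\ecal E \tens \pi_1^*\ecal L_\alpha$. Setting $\gamma(\Phi)(a,\alpha) := (b,\beta)$ then gives a well-defined group homomorphism (since conjugation is), and it lands in $\sp(X \times \hat X)$ because conjugation preserves the commutators defining the symplectic form.

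For the kernel of $\gamma$, suppose $\gamma(\Phi) = \id$, so that $\Phi \circ T_{(a,\alpha)} \iso T_{(a,\alpha)} \circ \Phi$ for every $(a,\alpha)$. Translating this back through kernels, $\ecal E$ is simultaneously invariant (up to isomorphism) under all diagonal translations by $(a,a)$ and under all twists by $\pi_1^*\ecal L_\alpha^{-1} \tens \pi_2^*\ecal L_\alpha$. A standard equivariance argument---inspecting cohomology sheaves, and using that translation-invariance on an abelian variety forces the support of each cohomology sheaf to lie on a translate of the diagonal---shows $\ecal E$ is a shifted pushforward of a degree-zero line bundle along such a translate. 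This yields exactly the three generating classes of $\ker \gamma$: translations, twists by $\pic^0(X)$, and shifts.

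The hardest step will be the surjectivity onto elementary symplectic automorphisms, which requires Mukai's theory of semihomogeneous bundles. Given $f \in \sp(X \times \hat X)$ with $f_2$ an isogeny, the plan is to construct a semihomogeneous vector bundle $\ecal E$ on $X \times X$ whose numerical invariants realize $f$; the isogeny hypothesis on $f_2$ is precisely the nondegeneracy needed to make the requisite Mukai vector admissible. One then verifies via Bridgeland's criterion that $\Phi_\ecal E$ is an equivalence and checks $\gamma(\Phi_\ecal E) = f$ using the displayed kernel equation, which follows from the built-in translation- and twist-equivariance of semihomogeneous bundles. This last step is where the abelian variety structure is genuinely used (rather than just the symplectic action) and constitutes the main technical obstacle.
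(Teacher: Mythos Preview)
The paper does not supply a proof of this theorem at all: it is stated with attribution to \cite{Orlov_2002} (specifically Theorem 2.10, Corollary 2.13, Proposition 3.2, Construction 4.10, Proposition 4.12 there) and then used as a black box in the proofs of the Proposition and Theorem \ref{main theorem: abelian} that follow. So there is no ``paper's own proof'' to compare your proposal against.

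That said, your outline is broadly faithful to Orlov's original argument, so a brief comment is warranted. The one genuine gap in your sketch is the sentence ``I would show that the conjugate $\Phi \circ T_{(a,\alpha)} \circ \Phi^{-1}$ is again isomorphic to $T_{(b,\beta)}$ for a unique $(b,\beta)$.'' This is the crux, and it is not formal: one needs to know a priori that conjugation by an arbitrary autoequivalence preserves the image of $X\times\hat X$ inside $\auteq\perf X$. In Orlov's treatment this is exactly where Proposition 3.2 (the kernel of any equivalence between abelian varieties is a sheaf up to shift) enters, together with a deformation/semicontinuity argument over $X\times\hat X$ showing that the family of kernels $t_{(a,0)}^*\ecal E \otimes \pi_1^*\ecal L_\alpha$ is again of the required translated-and-twisted form. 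Your Heisenberg-commutator remark about landing in $\sp$ is correct in spirit but also presupposes this closure. The kernel and semihomogeneous-bundle portions of your sketch match Orlov's strategy closely.
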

We have the following "global" understanding of the Fourier-Mukai locus of an abelian variety:
\begin{lemma}[\cite{ito2023gluing}*{\href{https://arxiv.org/pdf/2309.08147.pdf}{Lemma 5.1}}]\label{lem: Ito}
Let $\cal T$ be a triangulated category with an abelian variety $X \in \FM \cal T$. Then, any $Y \in \FM \cal T$ is also an abelian variety and we have
\[
\spec^\fm \cal T =\bigsqcup _{Y \in \FM \cal T} \spec_{\tens,Y} \cal T.
\]
as a scheme. 
\end{lemma}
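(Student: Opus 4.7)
The plan is to prove the two assertions separately, with the main content lying in the disjointness of the decomposition.

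For the first assertion, my approach is to invoke derived invariants. Any $Y \in \FM \cal T$ is derived equivalent to the abelian variety $X$, hence shares dimension, triviality of the canonical bundle, and Hochschild cohomology. In characteristic zero, Hochschild--Kostant--Rosenberg then yields $h^{0,1}(Y) = h^{0,1}(X) = \dim X$, so $b_1(Y) = 2 \dim Y$; combined with $K_Y \equiv 0$ and smooth projectivity, classical structure results force $Y$ to be an abelian variety.

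For the second assertion, by the definitions we immediately have $\spec^\fm \cal T = \bigcup_{Y \in \FM \cal T} \spec_{\tens, Y} \cal T$ as an open cover by open subschemes, and Balmer reconstruction makes this indexing well-defined: if $\tens$ is geometric in both $Y$ and $Z$, then $Y \cong \spc_\tens \cal T \cong Z$. It therefore suffices to prove disjointness, namely $\spc_{\tens, Y} \cal T \cap \spc_{\tens, Z} \cal T = \emp$ whenever $Y \not\cong Z$.

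Suppose the intersection is nonempty. Since each piece is locally homeomorphic to the variety $Y$ (respectively $Z$), the intersection contains a closed point, which via chosen tt-equivalences $\cal T \simeq \perf Y$ and $\cal T \simeq \perf Z$ corresponds to prime ideals $\cal S_Y(y)$ and $\cal S_Z(z)$ for closed points $y \in Y$, $z \in Z$. Composing produces a triangulated equivalence $\Phi: \perf Y \xrightarrow{\sim} \perf Z$ with $\Phi(\cal S_Y(y)) = \cal S_Z(z)$; analyzing indecomposable perfect complexes supported at a closed point on an abelian variety then upgrades this to $\Phi(k(y)) \cong k(z)[n]$ up to shift. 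I would then apply an extension of Theorem \ref{thm: Orlov abelian} (due to Orlov \cite{Orlov_2002}) to equivalences between distinct abelian varieties, producing a symplectic isomorphism $\gamma(\Phi): Y \times \hat Y \xrightarrow{\sim} Z \times \hat Z$. The relation $\Phi(k(y)) \cong k(z)[n]$ forces $\gamma(\Phi)$ into block upper-triangular form whose $(1,1)$-entry is an isomorphism $Y \xrightarrow{\sim} Z$ of abelian varieties, giving $Y \cong Z$.

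The main obstacle is this last step: the rigorous extension of Theorem \ref{thm: Orlov abelian} to equivalences between two distinct abelian varieties, together with the translation of the birationality relation $\Phi(k(y)) \cong k(z)[n]$ into the block upper-triangular constraint on $\gamma(\Phi)$ and the identification of its $(1,1)$-entry as an isomorphism. Concretely, this entails tracking how translations on $Y$ and $Z$ conjugate through the Fourier--Mukai kernel of $\Phi$, and then exploiting the symplectic relation to promote the $(1,1)$-entry from a mere morphism of abelian varieties to an isomorphism.
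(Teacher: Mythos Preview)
The paper does not supply a proof of this lemma; it is quoted from \cite{ito2023gluing}*{Lemma 5.1} and stated without argument. So there is no in-paper proof to compare against, and I assess your proposal on its own terms.

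Your outline for the first assertion is the standard one and is fine: derived invariance of Hodge numbers via HKR in characteristic zero gives $h^{0,1}(Y)=\dim Y$ together with $\omega_Y\cong\ecal O_Y$, and Kawamata's characterization of abelian varieties then finishes.

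For the disjointness, your strategy is correct up to the point where you obtain an equivalence $\Phi:\perf Y\simeq\perf Z$ sending some skyscraper to a skyscraper up to shift. Your justification of that step via ``indecomposable complexes supported at a point'' is not complete: the equality $\Phi(\cal S_Y(y))=\cal S_Z(z)$ only gives $z\in\supp\Phi(k(y))$, not that $\Phi(k(y))$ is a skyscraper. The correct input here is \cite{ito2023gluing}*{Lemma 4.11}, which is already invoked elsewhere in the paper. From that point on, however, you take a substantial detour. The paper's own framework records (see the Definition immediately preceding Lemma~\ref{lem: tt-sep}) that a birational equivalence forces $Y$ and $Z$ to be $K$-equivalent, in particular birationally equivalent; and birationally equivalent abelian varieties are isomorphic, since every rational map between abelian varieties extends to a morphism. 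This one classical fact replaces your entire Orlov-based endgame.

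If you insist on the Orlov route, it can be made to work, but not quite as you phrase it: a single skyscraper going to a skyscraper does not force $\gamma(\Phi)$ into block-triangular form. What actually happens is the computation carried out later in the proof of Theorem~\ref{main theorem: abelian}: the defining property of $\gamma(\Phi)$ yields $\Phi(k(y_0+a))\cong k(z_0+f_1(a))$ for every $a\in Y$, where $f_1$ is the $(1,1)$-block. Then $f_1:Y\to Z$ is a homomorphism of abelian varieties of equal dimension which is injective on closed points (because $\Phi$ separates non-isomorphic skyscrapers), hence an isomorphism. This is valid but is both longer than the birational argument and essentially duplicates the work of Theorem~\ref{main theorem: abelian}.
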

In particular, in order to understand the Fourier-Mukai locus, we can focus on the locus $\spec_{\tens,X} \cal T$ for a single abelian variety $X \in \FM \cal T$. In \cite{ito2023gluing}*{\href{https://arxiv.org/pdf/2309.08147.pdf}{Lemma 5.2}}, the following claim was only shown for an abelian variety with isomorphic dual, but it is also straightforward to show the result in general:
\begin{prop}
    An abelian variety $X$ is not tt-irreducible. 
\end{prop}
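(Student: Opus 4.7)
The strategy is to invoke Lemma \ref{lem: tt-irr}(iii): it suffices to produce a single triangulated autoequivalence of $\perf X$ that fails to be birational up to shift. The natural source of such autoequivalences on an abelian variety is Orlov's homomorphism $\gamma: \auteq \perf X \to \sp(X \times \hat X)$ from Theorem \ref{thm: Orlov abelian}, which realizes any elementary symplectic automorphism of $X \times \hat X$ as a Fourier-Mukai transform whose kernel is a (semihomogeneous) vector bundle.

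My first step is to exhibit a non-trivial elementary symplectic automorphism of $X \times \hat X$ for any abelian variety $X$. Since $\hat X$ is itself a projective abelian variety, it carries an ample line bundle, and the associated polarization yields an isogeny $\mu: \hat X \to \hat{\hat X} = X$ satisfying $\mu = \hat\mu$ by the self-duality of polarizations. I then set
\[
f := \begin{pmatrix} \id_X & \mu \\ 0 & \id_{\hat X} \end{pmatrix}
\]
and verify directly from Definition \ref{def:symplectic} that $f \in \sp(X \times \hat X)$ --- the symplectic condition reduces precisely to $\mu = \hat \mu$ --- and that $f$ is elementary since $f_2 = \mu$ is an isogeny.

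By Theorem \ref{thm: Orlov abelian}, there exist a semihomogeneous vector bundle $\ecal E$ on $X \times X$ and an autoequivalence $\Phi_\ecal E \in \auteq \perf X$ with $\gamma(\Phi_\ecal E) = f$. I then check that $\Phi_\ecal E$ is not birational up to shift: for any closed point $x \in X$ one computes
\[
\Phi_\ecal E(k(x)) \iso \ecal E|_{\{x\} \times X},
\]
which is a nonzero vector bundle on $X$, and since $\dim X \geq 1$ its support has positive dimension, so it cannot be isomorphic to any shift of a skyscraper sheaf $k(y)$. Lemma \ref{lem: tt-irr} then delivers the conclusion. The main subtlety to negotiate is producing the elementary symplectic automorphism without assuming $X$ is principally polarized; polarizing $\hat X$ rather than $X$ sidesteps having to invert a non-invertible isogeny, which is precisely the obstruction that forced the restriction to self-dual abelian varieties in \cite{ito2023gluing}*{Lemma 5.2}.
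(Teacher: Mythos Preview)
Your proof is correct and follows essentially the same route as the paper: both polarize $\hat X$ (rather than $X$) by an ample line bundle to obtain a self-dual isogeny $\mu=\phi_{\ecal L}:\hat X\to X$, form the same elementary symplectic automorphism $f=\begin{pmatrix}\id_X&\mu\\0&\id_{\hat X}\end{pmatrix}$, and invoke Theorem~\ref{thm: Orlov abelian} to realize $f$ by a Fourier--Mukai transform with vector-bundle kernel. The only cosmetic difference is the endgame: the paper cites \cite{ito2023gluing}*{Corollary 4.10} to conclude that the two copies of the Balmer spectrum are disjoint, whereas you compute $\Phi_{\ecal E}(k(x))\cong \ecal E|_{\{x\}\times X}$ directly and observe it has positive-dimensional support, which is exactly the computation underlying that corollary.
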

\begin{proof}
    By Lemma \ref{lem: tt-irr}, it suffices to show there is a triangulated equivalence $\Phi:\perf X \to \perf X$ such that
    \[
    \Phi(\spc_{\tens_X^\bb L}\perf X) \cap \spc_{\tens_X^\bb L}\perf X = \emp.
    \]
    Take an ample line bundle $\ecal L$ on $\hat X$ with isogeny $\phi_\ecal L: \hat X \to  \hat{\hat X} \iso X, x \mapsto t_x^*\ecal L \tens \ecal L^{-1}$. First, note that by \cite{Ploog_2005}*{\href{https://www.mathematik.hu-berlin.de/~ploog/PAPERS/Ploog_phd.pdf}{Example 4.5}}, we have 
    \[
     \gamma(-\tens_{\ecal O_{\hat X}}\ecal L) = \begin{pmatrix}
        \id_{\hat X}  &  0 \\
        \phi_\ecal L & \id_{X}
    \end{pmatrix} \in \sp(\hat X \times X)
    \]
    and in particular $\phi_\ecal L = \hat{\phi_{\ecal L}}$. Thus, we have 
    \[
    f:=\begin{pmatrix}
        \id_X  &  \phi_\ecal L \\
        0 & \id_{\hat X}
    \end{pmatrix} \in \sp(X\times \hat X).
    \]
    Now, since $f$ is, in particular, an elementary symplectic isomorphism, we have a vector bundle $\ecal E$ on $X\times X$ such that $\Phi_\ecal E \in \auteq \perf X$ (with $\gamma(\Phi_\ecal E) = f$) by Theorem \ref{thm: Orlov abelian}. Therefore, we see that 
    \[
    \Phi_{\ecal E}(\spc_{\tens_X^\bb L}\perf X) \cap \spc_{\tens_X^\bb L}\perf X = \emp.
    \] 
    by \cite{ito2023gluing}*{\href{https://arxiv.org/abs/2309.08147}{Corollary 4.10}} as desired. 
\end{proof}
Now, the following result gives an affirmative answer to \cite{ito2023gluing}*{\href{https://arxiv.org/abs/2309.08147}{Conjecture 5.9}}:
\begin{theorem}\label{main theorem: abelian}
    An abelian variety $X$ is tt-separated. 
\end{theorem}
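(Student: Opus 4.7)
The plan is to invoke Lemma \ref{lem: tt-sep}, so that tt-separatedness of $X$ reduces to showing that every birational triangulated autoequivalence $\Phi : \perf X \to \perf X$ sends each skyscraper $k(x)$ to some skyscraper $k(x')$. First, I would compose $\Phi$ on both sides with appropriate translations to arrange that $\Phi(k(0)) \iso k(0)$; this reduction is harmless because translations lie in $\ker \gamma$ and bijectively permute skyscrapers.

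The key observation is that $\ker \gamma$ is normal in $\auteq \perf X$ and has the explicit structure $\ker \gamma \iso (X \times \hat X)(k) \times \bb Z[1]$ described in Theorem \ref{thm: Orlov abelian}. For each closed point $a \in X$, since $t_a^* \in \ker \gamma$, the conjugate
\[
\Psi_a := \Phi \circ t_a^* \circ \Phi^{-1}
\]
also lies in $\ker \gamma$ and can be written uniquely as $\Psi_a = t_{g(a)}^* \circ (- \tens_X^\bb L \ecal L_{h(a)})[n(a)]$ for some $g(a) \in X$, $h(a) \in \hat X$, and $n(a) \in \bb Z$. The assignment $a \mapsto \Psi_a$ is a group homomorphism from $X(k)$ into the abelian group $\ker \gamma$, so projecting to the shift factor yields a group homomorphism $n : X(k) \to \bb Z$. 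Since $X(k)$ is divisible -- as $X$ is an abelian variety over the algebraically closed field $k$ -- and $\bb Z$ is torsion-free, $n$ must vanish identically.

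With the shift component killed, applying the identity $\Phi \circ t_a^* = \Psi_a \circ \Phi$ to $k(0)$ gives
\[
\Phi(k(-a)) = \Psi_a(k(0)) = t_{g(a)}^*\bigl(k(0) \tens_X^\bb L \ecal L_{h(a)}\bigr) = k(-g(a)),
\]
where the computation uses $\Phi(k(0)) \iso k(0)$ together with the fact that tensoring a skyscraper with a line bundle leaves it unchanged. Since $a$ is arbitrary, $\Phi$ sends every skyscraper to a skyscraper, as required.

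The main obstacle is the rigidity step that propagates the single equality $\Phi(k(0)) \iso k(0)$ to every skyscraper. The idea of using the conjugation action of translations together with normality of $\ker \gamma$ reduces this to showing that a group homomorphism $X(k) \to \bb Z$ must vanish, which is forced by the divisibility of $X(k)$. This is precisely where the abelian variety structure of $X$ plays an essential role; for a general smooth projective variety no such translation action is available.
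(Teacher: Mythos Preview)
Your proof is correct and takes a genuinely different route from the paper's. The paper works directly with the Fourier-Mukai kernel: it writes $\Phi=\Phi_{\ecal K}$ for a sheaf $\ecal K$ on $X\times X$ flat over both projections, invokes Orlov's explicit characterization of $\gamma(\Phi_{\ecal K})(a,\alpha)=(b,\beta)$ as an isomorphism between twisted translates of $\ecal K$, and then computes $\Phi(k(x_0+a))\iso \ecal K|_{\{x_0+a\}\times X}$ by restricting that isomorphism to the fiber over $x_0$, obtaining $k(y_0+b)$. Your argument bypasses the kernel entirely: you use only that $\ker\gamma$ is a \emph{normal} subgroup with the known structure $(X\times\hat X)(k)\times\bb Z[1]$, so the conjugate $\Phi\, t_a^*\,\Phi^{-1}$ lands back in $\ker\gamma$, and then divisibility of $X(k)$ forces the shift component of the resulting homomorphism $X(k)\to\ker\gamma$ to vanish. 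This is cleaner and more group-theoretic---it needs only the \emph{statement} of Orlov's description of $\ker\gamma$, not the formula defining $\gamma$ itself---whereas the paper's computation has the advantage of making the map $x_0+a\mapsto y_0+b$ explicit via the first component of $\gamma(\Phi)$. One small wording issue: the reason a homomorphism $X(k)\to\bb Z$ must vanish is that $\bb Z$ has no nonzero divisible subgroup, not merely that it is torsion-free (e.g.\ $\bb Q$ is torsion-free but admits nonzero maps from divisible groups); the conclusion is of course unaffected.
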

\begin{proof}
    Take a birational autoequivalence $\Phi\in \auteq \perf X$, i.e., suppose there exist $x_0,y_0\in X$ such that $\Phi(k(x_0)) \iso k(y_0)$. By Lemma \ref{lem: tt-sep}, it suffices to show that for any $x \in X$, there exists $y \in X$ such that $\Phi(k(x)) \iso k(y)$. Now, by \cite{martin2013relative}*{\href{https://arxiv.org/pdf/1011.1890.pdf}{Proposition 3.2}}, we have $\Phi = \Phi_\ecal K$ for a sheaf $\ecal K$ on $X \times X$ that is flat along each projection and therefore  by \cite{HuyBook}*{Example 5.4} we have $\ecal K|_{\{x_0\}\times X} \iso \Phi(k(x_0) \iso k(y_0)$ under the canonical identification $\{x_0\}\times X \iso X$. Moreover, by  Theorem \ref{thm: Orlov abelian}, there is a corresponding isomorphism $f_\ecal K:X\times \hat X \to X\times \hat X$ satisfying $f_\ecal K(a,\alpha) = (b,\beta)$ if and only if 
    \[
    t^*_{(a,0)} \ecal K \tens_{\ecal O_{X\times X}} \pi_1^* \ecal L_\alpha \iso { t_{(0,b)}}_*\ecal K  \tens_{\ecal O_{X\times X}} \pi_2^* \ecal L_\beta,
    \]
    which is equivalent to
    \[
    t^*_{(a,0)}\ecal K \iso t^*_{(0,-b)}\ecal K \tens_{\ecal O_{X\times X}} \pi_2^*\ecal L_\beta \tens_{\ecal O_{X\times X}} (\pi_1^* \ecal L_\alpha) \inv. 
    \]
    Under canonical identifications $\{x\} \times X \iso X \iso X \times \{y\}$ for closed points $x,y \in X$, we therefore see that for any $a \in X$, we can take $\alpha, b,\beta$ such that 
    \begin{align*}
        \Phi(k(x_0+a)) &\iso \ecal K|_{\{x_0+a\}\times X} \iso (t^*_{(a,0)}\ecal K)|_{\{x_0\}\times X} \\
        &\iso \l(t^*_{(0,-b)}\ecal K \tens_{\ecal O_{X\times X}} \pi_2^*\ecal L_\beta \tens_{\ecal O_{X\times X}} (\pi_1^* \ecal L_\alpha)\inv \r)|_{\{x_0\}\times X}  \\
        & \iso t^*_{-b}(\ecal K|_{\{x_0\}\times X}) \tens_X \l( \pi_2^*\ecal L_\beta \tens_{\ecal O_{X\times X}} (\pi_1^* \ecal L_\alpha)\inv \r)|_{\{x_0\}\times X} \\
        & \iso k(y_0+b) \tens _{\ecal O_X}\l( \pi_2^*\ecal L_\beta \tens_{\ecal O_{X\times X}} (\pi_1^* \ecal L_\alpha)\inv \r)|_{\{x_0\}\times X}\iso k(y_0 + b)
    \end{align*}
    as desired. 
\end{proof}

{As a combination of Construction \ref{construction: decomposition of tt-seprated variety}, Lemma \ref{lem: Ito}, and Theorem \ref{main theorem: abelian}, we obtain some kind of a generalization of \cite{matsui2023triangular}*{Corollary 4.10} (see also \cite{HO22}*{Theorem 4.11} and \cite{hirano2024FMlocusK3}*{Theorem 5.3}). Here we note that $\spec^\fm \perf X = \spec_\vartriangle \perf X$ holds {for an elliptic curve $X$} by \cite{HO22}*{Proposition 4.10} and the subsequent argument. 
\begin{corollary}\label{cor: abelian}
Let $X$ be an abelian variety.
Then there is an isomorphism
$$
\spec^\fm \perf X \cong \bigsqcup _{Y \in \FM {\perf X}} \bigsqcup_{{I_Y}} Y
$$
of schemes.
\end{corollary}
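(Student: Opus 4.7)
The plan is to assemble the three ingredients already in hand: Lemma \ref{lem: Ito}, which splits the Fourier-Mukai locus into a disjoint union indexed by Fourier-Mukai partners; Theorem \ref{main theorem: abelian}, which says every abelian variety is tt-separated; and Construction \ref{construction: decomposition of tt-seprated variety}, which unpacks $\spec_{\tens,Y}\perf Y$ into $\bigsqcup_{I_Y} Y$ whenever $Y$ is a tt-separated smooth projective variety. No new computation should be required.

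First I would apply Lemma \ref{lem: Ito} with $\cal T = \perf X$ to obtain
\[
\spec^\fm \perf X \;\cong\; \bigsqcup_{Y \in \FM \perf X} \spec_{\tens,Y} \perf X
\]
as schemes, and to observe that every $Y \in \FM \perf X$ is itself an abelian variety. Then for each such $Y$, a chosen triangulated equivalence $\Phi_Y: \perf X \xrightarrow{\sim} \perf Y$ induces, via the commutative diagram of spectra in the proof of the proposition on openness of the Fourier-Mukai locus, an isomorphism of open subschemes $\spec_{\tens,Y} \perf X \cong \spec_{\tens,Y} \perf Y$; here I would note that $\Phi_Y$ matches up tt-structures geometric in $Y$ on both sides, so the two unions of Balmer spectra defining these loci are identified.

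Next, Theorem \ref{main theorem: abelian} applied to the abelian variety $Y$ guarantees that $Y$ is tt-separated, so Construction \ref{construction: decomposition of tt-seprated variety} yields
\[
\spec_{\tens,Y} \perf Y \;\cong\; \bigsqcup_{I_Y} Y
\]
as schemes, where $I_Y$ is the coset space defined there. Splicing these isomorphisms together gives the desired formula.

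The only step that requires any care is checking that the identification $\spec_{\tens,Y}\perf X \cong \spec_{\tens,Y}\perf Y$ genuinely respects the scheme structure, not merely the underlying topological space; but this follows from Theorem \ref{prop: structure sheaf of FM locus}, since the structure sheaves on both sides are restrictions of the Matsui structure sheaves, and a $k$-linear triangulated equivalence induces a $k$-scheme isomorphism between Matsui spectra (as already noted in the proof of the Bondal-Orlov-Ballard theorem). Given this, the assembly is immediate and the corollary follows.
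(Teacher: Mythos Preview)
Your proposal is correct and follows exactly the approach indicated in the paper, which simply states the corollary as ``a combination of Construction \ref{construction: decomposition of tt-seprated variety}, Lemma \ref{lem: Ito}, and Theorem \ref{main theorem: abelian}'' without further elaboration. Your added care about transporting $\spec_{\tens,Y}\perf X$ to $\spec_{\tens,Y}\perf Y$ along a chosen equivalence (and checking this respects scheme structure via Theorem \ref{prop: structure sheaf of FM locus}) fills in a step the paper leaves implicit, but the overall route is the same.
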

}

By Theorem \ref{prop: structure sheaf of FM locus} {and Corollary \ref{cor: abelian}}, we can reconstruct all the Fourier-Mukai partners of an abelian variety $X$ if we can identify the Fourier-Mukai locus 
\[
\spc^\fm \perf X \subset \spc_\vartriangle \perf X
\]
purely categorically. Along this line, the following conjecture was made in \cite{ito2023gluing}:
\begin{conjecture}[\cite{ito2023gluing}*{\href{https://arxiv.org/pdf/2309.08147}{Conjecture 6.14}}]
    Let $\cal T$ be a triangulated category with $\FM \cal T\neq \emp$. Then, we have
    \[
    \spc^\fm \cal T = \spc^\ser \cal T. 
    \]
\end{conjecture}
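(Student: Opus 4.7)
The plan has two parts corresponding to the two inclusions.

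The forward inclusion $\spc^\fm \cal T \subseteq \spc^\ser \cal T$ is routine. Given $\cal P \in \spc_\tens \cal T$ for some geometric tt-structure $\tens$ on $\cal T$, fix an equivalence of tt-categories $\Phi : (\cal T, \tens) \xrightarrow{\sim} (\perf Y, \tens_Y^\bb L)$ for some $Y \in \FM \cal T$. Then $\Phi(\cal P)$ is a prime $\tens_Y^\bb L$-ideal of $\perf Y$, and because the Serre functor of $\perf Y$ is $- \tens_Y^\bb L \omega_Y[\dim Y]$ with $\omega_Y$ invertible, every $\tens_Y^\bb L$-ideal is automatically closed under it. Serre functors, prime thick subcategories, and the assignment $\cal P \mapsto \Phi(\cal P)$ are all preserved by triangulated equivalences, so $\cal P$ is Serre-invariant in $\cal T$.

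The reverse inclusion $\spc^\ser \cal T \subseteq \spc^\fm \cal T$ is the substantive direction. Fixing $X \in \FM \cal T$ and identifying $\cal T \simeq \perf X$, the task becomes: for a Serre-invariant prime thick subcategory $\cal P \subseteq \perf X$, produce $Y \in \FM \perf X$ and a triangulated equivalence $\Phi : \perf X \xrightarrow{\sim} \perf Y$ such that $\Phi(\cal P)$ is a prime $\tens_Y^\bb L$-ideal of $\perf Y$. By Theorem~\ref{thm: matsui23}~(i) and the natural $\auteq \perf X$-action on $\spc_\vartriangle \perf X$, this is equivalent to finding $\tau \in \auteq \perf X$ and a closed point $y$ of some $Y \in \FM \perf X$ with $\tau(\cal P) = \cal S_Y(y)$ under the chosen identification.

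The proposed strategy is to proceed case by case along the hierarchy already visible in the paper. When $X$ has (anti-)ample canonical bundle, Lemma~\ref{lemma: hirano-ouchi}~(ii) gives the equivalence ``Serre-invariant $\Leftrightarrow$ $\tens_X^\bb L$-ideal'' among prime thick subcategories, and $\FM \perf X = \{X\}$, so the conjecture follows at once. When $X$ is an abelian variety the Serre functor is a shift, every prime thick subcategory is Serre-invariant, and the conjecture reduces to the statement $\spc_\vartriangle \perf X = \spc^\fm \perf X$; this is established in dimension one by \cite{HO22}*{Proposition 4.10}, and the higher-dimensional case should be attacked using Orlov's description of $\auteq \perf X$ via symplectic automorphisms of $X \times \hat X$ (Theorem~\ref{thm: Orlov abelian}) combined with Theorem~\ref{main theorem: abelian} to transport an arbitrary prime into the standard Balmer locus. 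K3 surfaces and Calabi-Yau threefolds can be approached analogously using their known kernel-level autoequivalence groups.

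The main obstacle is manufacturing, from purely triangulated-categorical input attached to a Serre-invariant prime $\cal P$, the geometric data $(Y, y)$ -- essentially upgrading abstract Serre invariance to ``locally the kernel category of a skyscraper sheaf on some smooth projective variety''. A direct attack would require classifying prime thick subcategories of $\perf X$ beyond the Balmer primes, which is open even for abelian surfaces, and would require a categorical recognition principle for when such a prime lies inside some geometric tt-structure. A complete proof therefore likely demands either an extension of the Bondal-Orlov / Ballard reconstruction machinery that recovers a variety from a single Serre-invariant prime thick subcategory together with its local categorical data, or a new bridge between the $\auteq \cal T$-orbit structure on $\spc_\vartriangle \cal T$ and the classification of Fourier-Mukai partners of $\cal T$.
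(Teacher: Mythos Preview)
The statement you are attempting to prove is listed in the paper as a \emph{conjecture}, not a theorem, and the paper does not supply a proof. More to the point, the sentence immediately following the conjecture records that it \emph{fails}: Hirano--Ouchi \cite{hirano2024FMlocusK3}*{Theorem 5.8} exhibit K3 surfaces $X$ for which $\spc^\ser\perf X \supsetneq \spc^\fm\perf X$. So the reverse inclusion you set out to prove is simply false in general, and your case-by-case plan cannot succeed at the K3 step; the ``analogous'' attack via known autoequivalence groups will produce Serre-invariant primes (indeed, every prime is Serre-invariant since $\omega_X\cong\ecal O_X$) that no autoequivalence carries into a Balmer locus.

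Your forward inclusion $\spc^\fm\cal T\subset\spc^\ser\cal T$ is correct and is exactly the easy direction recorded in \cite{ito2023gluing}*{Corollary 6.3}. For the reverse direction, note also that even in the abelian case your appeal to Theorem~\ref{main theorem: abelian} overreaches: that result proves tt-\emph{separatedness} (birational autoequivalences send skyscrapers to skyscrapers), which controls how copies of $X$ glue inside $\spc^\fm\perf X$, but says nothing about primes lying \emph{outside} the Fourier--Mukai locus. The paper explicitly leaves open whether the conjecture holds for any abelian variety of dimension $>1$, so reducing to $\spc_\vartriangle\perf X=\spc^\fm\perf X$ there is reducing to an unknown, not a known, statement.
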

The conjecture holds for curves (in particular, elliptic curves) and smooth projective varieties with (anti-) ample canonical bundle, but in \cite{hirano2024FMlocusK3}, it was shown that when we have a certain K3 surface $X \in \FM \cal T$, the conjecture fails (\cite{hirano2024FMlocusK3}*{\href{https://arxiv.org/pdf/2405.01169}{Theorem 5.8}}). Their proof relies on the existence of spherical objects in $\cal T$ so the result does not directly generalize to abelian surfaces and we are interested if there are certain classes of abelian varieties of dimension $>1$ for which the conjecture holds.  
\bibliography{bib}
\end{document}